\newcommand{\Z}{\ensuremath{\mathbb{Z}}}
\newcommand{\PP}{\ensuremath{\mathbb{P}}}
\newcommand{\A}{\ensuremath{\mathbb{A}}}
\newcommand{\JJ}{\ensuremath{\mathcal{J}}}
\newcommand\kk{\mathbf{k}}
\DeclareMathOperator{\Aut}{Aut}
\DeclareMathOperator{\Bir}{Bir}
\DeclareMathOperator{\GL}{GL}
\DeclareMathOperator{\Id}{Id}
\DeclareMathOperator{\Cr}{Cr}
\DeclareMathOperator{\PGL}{PGL}
\DeclareMathOperator{\id}{id}
\DeclareMathOperator{\Sym}{Sym}
\DeclareMathOperator{\On}{O}
\newcommand\mapsfrom{\mathrel{\reflectbox{\ensuremath{\longmapsto}}}}
\def\dashmapsto{\mapstochar\dashrightarrow}
\newcommand\SBa[1][\scalebox{0.8}]{#1}
   \def\dashmapsto{\mathrel{\mapstochar\dashrightarrow}}
\newtheorem{thm}{\bf Theorem}[section]
\newtheorem*{Mthm}{\bf Main Theorem}
\newtheorem*{thm*}{\bf Theorem}
\newtheorem{lem}[thm]{\bf Lemma}
\newtheorem{prop}[thm]{\bf Proposition}
\theoremstyle{definition}
\newtheorem{defi}[thm]{\bf Definition}
\newtheorem{rmk}[thm]{\bf Remark}
\newtheorem{notation}[thm]{\bf Notation}
\author{Christian Urech and Susanna Zimmermann}
\title{A new presentation of the plane Cremona group}
\date{\today}
\address{Christian Urech\\ Departement Mathematik und Informatik\\ Spiegelgasse 1\\ 4051 Basel\\ Switzerland}
\email{christian.urech@gmail.com}
\address{Susanna Zimmermann\\ Facult\'e des sciences\\ 2 Bd de Lavoisier\\ 49045 Angers cedex 01\\ France}
\email{zimmermann@math.univ-angers.fr}
\thanks{During this work, both authors were partially supported by the Swiss National Science Foundation.}
\begin{document}
%subjclass
\subjclass[2010]{14E07, 20F05}
\maketitle

\begin{abstract}
We give a presentation of the plane Cremona group over an algebraically closed field with respect to the generators given by the Theorem of Noether and Castelnuovo. This presentation is particularly simple and can be used for explicit calculations. 
\end{abstract}

\section{Introduction} 
\subsection{Main result}
Let $\kk$ be an algebraically closed field. The plane Cremona group $\Bir(\PP^2)$ is the group of birational transformations of the projective plane $\PP^2=\PP^2_\kk$. It was intensely studied by classical algebraic geometers and has attracted again considerable attention in the last decades. 

If we fix homogeneous coordinates $[x:y:z]$ of $\PP^2$, every element $f\in\Bir(\PP^2)$ is given by 
\[
f\colon [x:y:z]\dashmapsto [f_0(x,y,z):f_1(x,y,z):f_2(x,y,z)],
\]
where  the $f_i$ are homogeneous polynomials of the same degree, the {\it degree of $f$}. We will write $f=[f_0(x,y,z):f_1(x,y,z):f_2(x,y,z)]$.
One of the main classical results is the Theorem of Noether and Castelnuovo  (\cite{C01}), which states that $\Bir(\PP^2)$ is generated by the linear group $\Aut(\PP^2)=\PGL_3(\kk)$ and the standard quadratic involution $\sigma:=[yz:xz:xy]$.

Throughout the last century, various presentations of the plane Cremona group have been given (see Section \ref{overview}). Each one of these displays a particularly interesting and beautiful aspect of the Cremona group. The aim of this article is to give a presentation of the plane Cremona group that uses the generators given by the Theorem of Noether and Castelnuovo. Another advantage of our presentation is that it is particularly simple and in many cases easy to use for specific calculations. We will illustrate this in Section \ref{gizhom} with an example that is due to Gizatullin.

Denote by $D_2\subset\mathrm{PGL}_3(\kk)$ the subgroup of diagonal automorphisms and by $S_3\subset\mathrm{PGL}_3(\kk)$ the symmetric group of order $6$ acting on $\PP^2$ by coordinate permutations.

\begin{Mthm}\label{thm:relations}
Let $\kk$ be an algebraically closed field. The Cremona group $\Bir(\PP^2)$ is isomorphic to
\[\Bir(\PP^2)\simeq\big\langle\sigma, \mathrm{PGL}_3(\kk)\mid (\ref{rel1})-(\ref{rel5}) \ \big\rangle\]
\begin{enumerate}
\item\label{rel1} $g_1g_2g_3^{-1}=\mathrm{id}$ for all $g_1, g_2, g_3\in\mathrm{PGL}_3(\kk)$ such that $g_1g_2=g_3$.
\item\label{rel2} $\sigma^2=\mathrm{id}$,
\item\label{rel3} $\sigma\tau(\tau\sigma)^{-1}=\mathrm{id}$ for all $\tau\in S_3$, 
\item\label{rel4} $\sigma d\sigma d=\mathrm{id}$ for all diagonal automorphisms $d\in D_2$, 
\item\label{rel5} $(\sigma h)^3=\mathrm{id}$, where $h=[z-x:z-y:z]$.
\end{enumerate}
\end{Mthm}

Observe that the relations (\ref{rel2}) to (\ref{rel4}) occur in the group $\Aut(\kk^*\times\kk^*)$, which is given by the group of monomial transformations $\GL_2(\Z)\ltimes D_2$. 
Relation (\ref{rel5}) is a relation from the group 
\[\Aut(\PP^1\times\PP^1)^0\simeq\mathrm{PGL}_2(\kk)\times\mathrm{PGL}_2(\kk)\]
which we consider as a subgroup of $\Bir(\PP^2)$ by conjugation with the birational equivalence  $\varpi\colon\PP^1\times\PP^1\dashrightarrow\PP^2$, given by $([u_0:u_1], [v_0:v_1])\dashmapsto [u_1v_0:u_0v_1:u_1v_1]$.\par
The idea of the proof of the Main Theorem is the same as in \cite{I84,B12,Z16}. We study linear systems of compositions of birational transformations and use the presentation of the Cremona group given by Blanc in \cite{B12}.

%%%%%%%%%%%%%%%%%%%%%%%%%%%%%%%%

\subsection{Previous presentations}\label{overview}
The aim of this section is to give an overview over previous presentations of the plane Cremona group. The first presentation was given by Gizatullin in 1982 (\cite{G82}, see also \cite{G90}):

\begin{thm}[{\cite[Theorem 10.7]{G82}}]
The Cremona group $\Bir(\PP^2)$ is generated by the set $Q$ of all quadratic transformations and the relations in $\Bir(\PP^2)$ are consequences of relations of the form $q_1q_2q_3=\id$, where $q_1,q_2, q_3$ are quadratic transformations, i.e. we have the presentation:
\[
\Bir(\PP^2)=\langle Q\mid q_1q_2q_3=\id \text{ for all } q_1, q_2, q_3\in Q \text{ such that } q_1q_2q_3=\id \text{ in }\Bir(\PP^2)\rangle.
\]
\end{thm}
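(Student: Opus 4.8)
The plan is to exhibit a surjective homomorphism from the abstractly presented group onto $\Bir(\PP^2)$ and then to prove it is injective by a degree-reduction argument on factorisations into quadratic maps.

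For generation, recall that by the Theorem of Noether and Castelnuovo $\Bir(\PP^2)=\langle\PGL_3(\kk),\sigma\rangle$. Here $\sigma\in Q$, and for every $g\in\PGL_3(\kk)$ the composition $\sigma g$ is again a quadratic transformation (it has degree $2$, and its inverse $g^{-1}\sigma$ has degree $2$), so $g=\sigma(\sigma g)$ writes every linear map as a product of two elements of $Q$. Hence $Q$ generates $\Bir(\PP^2)$. Letting $F$ be the free group on the set $Q$ and $N\trianglelefteq F$ the normal closure of the relators $q_1q_2q_3$ for which $q_1q_2q_3=\id$ holds in $\Bir(\PP^2)$, we obtain a surjection $\pi\colon G:=F/N\to\Bir(\PP^2)$. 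The whole content of the theorem is that $\ker\pi$ is trivial, i.e. that every word $w=q_n\cdots q_1$ in the generators with $w=\id$ in $\Bir(\PP^2)$ is already trivial in $G$.

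I would phrase injectivity geometrically. Consider the graph $\Gamma$ whose vertices are the elements of $\Bir(\PP^2)$ and in which $f$ and $f'$ are joined by an edge whenever $f'f^{-1}$ is a quadratic transformation. The triangular relators correspond exactly to the triangles of $\Gamma$, so the claim is equivalent to the statement that the $2$-complex obtained from $\Gamma$ by gluing in all such triangles is simply connected. The word $w$ then becomes a loop based at $\id$ that must be contracted through triangles. To contract it I would induct on the degrees of its vertices: pick a vertex $f_i$ of maximal degree $d\geq 2$ along the loop, so that $f_i$ is a local peak with neighbours $f_{i-1},f_{i+1}$ of degree $\leq d$. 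The key geometric input is a peak-reduction lemma: by analysing the base points of the two quadratic maps $f_if_{i-1}^{-1}$ and $f_{i+1}f_i^{-1}$ one produces a quadratic transformation based at suitable base points of $f_i$ that strictly lowers its degree, and one shows that the short detour $f_{i-1},f_i,f_{i+1}$ and the new lower path bound a disc of $\Gamma$ tiled by triangles. Iterating removes all peaks of degree $\geq 3$, and a final analysis of the remaining low-degree configurations reduces the loop to relations supported near $\PGL_3(\kk)$ and $\sigma$, which one checks by hand to be consequences of triangular relations.

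The main obstacle is the peak-reduction lemma, where essentially all the work lies. One must verify, by a Noether-Castelnuovo type inequality, that a degree-lowering quadratic transformation always exists; that the resulting polygon of birational maps genuinely decomposes into honest triangular relations; and, most delicately, that the \emph{degenerate} quadratic transformations arising along the way (those whose three base points are infinitely near or collinear) can either be avoided or are themselves reached through triangular relations. Keeping track of the infinitely near base points under composition, and controlling these degenerate configurations, is the part I expect to be hardest and most technical.
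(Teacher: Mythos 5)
This statement is quoted by the paper from \cite[Theorem 10.7]{G82} \emph{without proof}, so your proposal can only be measured against the known arguments of Gizatullin--Iskovskikh type, and by that measure it is a reasonable frame with the actual theorem left unproved. The generation step is correct, and recasting the relator problem as simple connectivity of the triangle complex on $\Bir(\PP^2)$ is the standard move. But everything then hangs on the peak-reduction lemma, which you assert rather than prove, and it is not a routine verification: at a vertex $f_i$ of maximal degree $d\geq 2$, the Noether inequality produces three base-points of $f_i$ whose multiplicities sum to more than $d$, but these may be collinear or infinitely near one another, in which case no quadratic transformation centred at them exists and no single quadratic edge need lower the degree; and even when a degree-dropping quadratic exists, showing that the quadrilateral with vertices $f_{i-1}$, $f_i$, $f_{i+1}$ and the new vertex is tiled by genuine triangles of $\Gamma$ requires precise control of the common base-points of the two quadratic edges at the peak. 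This is exactly the point where the analogous peak-reduction proof of Iskovskikh (Theorem~\ref{isk}) contained a gap that stood for decades until Lamy closed it \cite{L10}, and it is why the present paper, working only in the far more restricted de Jonqui\`eres setting, needs the whole apparatus of Lemmas~\ref{lem:deg2}, \ref{lem:square}, \ref{lem:discrepancy}, \ref{lem:no bp 1} and \ref{lem:no bp 2}: discrepancy bookkeeping and the systematic elimination of infinitely near base-points by conjugating with auxiliary quadratic involutions through general points. Flagging this step as ``hardest and most technical'' does not discharge it; as written, the core of the theorem is missing.

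There is also a smaller but genuine slip in your reformulation: the equivalence between the presentation and simple connectivity of the $2$-complex is not literal. A backtrack in a loop corresponds to the length-two relation $q\bar{q}=\id$, where $\bar{q}\in Q$ denotes the quadratic map $q^{-1}$, and such words are \emph{not} consequences of the triangular relators in the free group on $Q$: the homomorphism to $\Z/3\Z$ sending every generator in $Q$ to $1$ kills every relator $q_1q_2q_3$ but not, say, the word $\sigma\sigma$ (here $\bar{\sigma}=\sigma$), even though $\sigma\circ\sigma=\id$ holds in $\Bir(\PP^2)$. So simple connectivity proves the presentation whose relators are the triangular words \emph{together with} the inverse-pair identifications $q\bar{q}=\id$; the statement has to be read with that convention built in, and your argument should say so explicitly rather than claim the two formulations are ``exactly'' equivalent.
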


\begin{defi}
We denote by $\JJ\subset\Bir(\PP^2)$ the group of transformations preserving the pencil of lines through $[1:0:0]$ and call it the {\it de Jonqui\`eres group}.
\end{defi}
With respect to affine coordinates $(x,y)$, the de Jonqui\`eres can be described by: 
\begin{align*}\JJ=&\left\{(x,y)\dashmapsto\left(\frac{ax+b}{cx+d},\frac{\alpha(x)y+\beta(x)}{\gamma(x)y+\delta(x)}\right)\middle\vert \  \left(\begin{matrix}a&b\\ c&d\end{matrix}\right)\in\mathrm{PGL}_2(\kk),\  \left(\begin{matrix}\alpha(x)&\beta(x)\\ \gamma(x)&\delta(x)\end{matrix}\right)\in\mathrm{PGL}_2(\kk(x))\right\}\\
\simeq&\ \mathrm{PGL}_2(\kk)\ltimes\mathrm{PGL}_2(\kk(x))
\end{align*}

By the Theorem of Noether-Castelnuovo, $\Bir(\PP^2)$ is generated by $\JJ$ and $\PGL_3(\kk)$, since $\sigma$ is an element of $\JJ$. The following theorem shows that, in a certain way, most of the relations in $\Bir(\PP^2)$ are consequences of relations within these groups:

\begin{thm}[\cite{B12}]\label{blancisk}
The Cremona group $\Bir(\PP^2)$ over an algebraically closed field is the amalgamated product of $\PGL_3(\kk)$ and $\JJ$ along their intersection, divided by the relation
\[
\sigma\tau=\tau\sigma,
\]
where $\sigma$ is the standard involution and $\tau\in\PGL_3(\kk)$ the transposition $[y:x:z]$.
\end{thm}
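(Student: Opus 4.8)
The plan is to deduce the presentation from Gizatullin's presentation (Theorem 10.7) together with the Theorem of Noether--Castelnuovo. Since $\sigma\in\JJ$, Noether--Castelnuovo tells us that $\PGL_3(\kk)$ and $\JJ$ generate $\Bir(\PP^2)$, so the canonical homomorphism from the amalgamated product $A=\PGL_3(\kk)\ast_{\PGL_3(\kk)\cap\JJ}\JJ$ to $\Bir(\PP^2)$ is surjective. One checks directly that $\sigma\tau=\tau\sigma$ holds in $\Bir(\PP^2)$ (both compositions equal $[xz:yz:xy]$), so this surjection factors through the quotient $G$ of $A$ by the normal subgroup generated by $\sigma\tau\sigma^{-1}\tau^{-1}$, yielding a surjection $\pi\colon G\to\Bir(\PP^2)$. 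The whole content of the theorem is the injectivity of $\pi$.

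To prove injectivity I would construct a candidate inverse. By Gizatullin's presentation, $\Bir(\PP^2)$ is generated by the set $Q$ of quadratic transformations subject only to the relations $q_1q_2q_3=\id$ (the ``triangle relations''), so it suffices to define a map $\rho\colon Q\to G$ and verify that it sends every triangle relation to the identity of $G$; this produces a homomorphism $\Bir(\PP^2)\to G$ inverse to $\pi$. To define $\rho$, recall that every $q\in Q$ has three (possibly infinitely near) base points, and that according to its base-point configuration it can be written as $q=a_1\,\theta\,a_2$ with $a_1,a_2\in\PGL_3(\kk)$ and $\theta$ one of finitely many standard quadratic maps, each of which lies in $\JJ$ (in particular $\sigma$ itself is one of them). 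One sets $\rho(q)$ to be the resulting word in the generators of $G$. The first thing to check is that $\rho(q)$ is independent of the chosen factorization: two factorizations differ by the stabilizer of the base locus of $\theta$ inside $\PGL_3(\kk)$, and the relation $\sigma\tau=\tau\sigma$ together with the amalgam relations must absorb exactly this ambiguity — this is where the symmetry of the base locus of $\sigma$, realized by the transposition $\tau$, enters.

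The technical heart, and the step I expect to be the main obstacle, is showing that every triangle relation $q_1q_2q_3=\id$ maps to the identity in $G$. The strategy is geometric: one analyses the configurations of base points of the three maps. When the $q_i$ share a common pencil of lines, the entire relation already takes place inside $\JJ$ and is a relation of the amalgam; when all base points are proper and in general position, each $q_i$ is linearly equivalent to $\sigma$, and after conjugating by $\PGL_3(\kk)$ the relation reduces to one built from the amalgam relations and $\sigma\tau=\tau\sigma$. The remaining work is to reduce the mixed and degenerate configurations — base points that are infinitely near, or two of the three maps sharing part of their base locus — to these two model cases, by multiplying through with suitable linear and de Jonqui\`eres maps so that a common pencil becomes visible. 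Carrying out this case analysis, and thereby confirming that no relation survives beyond those already imposed in $G$, is the crux; everything else is bookkeeping inside the amalgamated product.
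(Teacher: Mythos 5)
First, for orientation: the paper does not prove this statement at all --- it is Blanc's theorem, quoted from \cite{B12} and used as the key input for the paper's Main Theorem; Blanc's own proof (like the paper's Proposition~\ref{prop:relations J}, which runs the same style of argument one level down) proceeds by induction on degrees of linear systems of compositions, not through Gizatullin's presentation. Your framing is partly correct: surjectivity via Noether--Castelnuovo, the verification that $\sigma\tau=\tau\sigma$ holds in $\Bir(\PP^2)$, and the reduction to injectivity are all fine, and the route through \cite{G82} is not circular and is even attractive in principle, since in a triangle relation $q_1q_2q_3=\id$ all degrees are bounded by $2$, so Gizatullin's theorem would absorb the unbounded induction. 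But the proposal has a genuine gap exactly at the step you yourself flag as the crux: no mechanism is supplied for showing that a triangle relation is a consequence of the amalgam relations together with $\sigma\tau=\tau\sigma$, and the two model cases you do sketch are not correctly handled.

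Concretely: (i) the ``common pencil'' case is misstated --- the three quadratic maps of a triangle relation generically preserve three \emph{different} pencils (each $q_i$ sends the pencil through a base point onto the pencil through a base point of $q_i^{-1}$), so the relation lives in a single conjugate of $\JJ$ only in the special situation where all three maps preserve one and the same pencil. If you instead interpolate linear maps, writing $q_1q_2q_3=(q_1h_1)(h_1^{-1}q_2h_2)(h_2^{-1}q_3)$ so that each factor becomes a product of elements of $\JJ$ and $\PGL_3(\kk)$, you get a word alternating between the two factors that equals the identity in $\Bir(\PP^2)$ --- but the claim that such a word is trivial modulo the amalgam relations and $\sigma\tau=\tau\sigma$ is \emph{precisely} the injectivity being proved, so it cannot be invoked as ``a relation of the amalgam.'' (ii) In the general-position case, ``after conjugating by $\PGL_3(\kk)$ the relation reduces'' is an assertion without a mechanism; what is actually needed is a rewriting device of the kind in Lemma~\ref{lem:deg2} of the present paper, where $(\sigma h)^3=\id$ with $h=[z-x:z-y:z]$ --- note $h$ fixes $[1:0:0]$, so this identity is internal to $\JJ$ and hence available in your group $G$ --- converts $\sigma g\sigma$ into $g'\sigma g''$, and that device applies only under a non-collinearity hypothesis. (iii) The degenerate configurations with infinitely near base points ($a_2b_2=0$ in the notation of Remark~\ref{rmk:the linear map}) are where the real difficulty sits; ``multiplying through with suitable linear and de Jonqui\`eres maps so that a common pencil becomes visible'' names the goal, not an argument --- the analogous step in the paper costs the entire discrepancy machinery of Lemmas~\ref{lem:discrepancy}--\ref{lem:no bp 2}. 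Finally, well-definedness of $\rho$ requires more than the stabilizer computation you indicate: you must also check that when $q$ itself lies in $\JJ$, the word $a_1\theta a_2$ equals the amalgam element $q\in\JJ\subset G$, since otherwise your candidate inverse fails to restrict to the identity on the factor $\JJ$. The skeleton is sound and the plan could plausibly be completed by a finite but substantial case analysis; as written, however, the theorem's entire content is concentrated in the unexecuted crux.
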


Note that the Cremona group does not have the structure of an amalgamated product \cite{C13}.
Theorem~\ref{blancisk} was proceeded by the following statement shown by Iskovskikh in \cite{I84}:

\begin{thm}[\cite{I84}]\label{isk}
The Cremona group $\Bir(\PP^1\times\PP^1)$ is generated by $\tau\colon (x,y)\mapsto(y,x)$ and the group $B$ of birational transformations preserving the fibration given by the first projection and the following relations form a complete system of relations:
\begin{itemize}
\item relations inside the groups $\Aut(\PP^1\times\PP^1 )$ and $B$, 
\item $(\tau \circ (\frac{1}{y},\frac{y}{x}))^3=\id$,
\item $(\tau\circ(-x,y-x))^3=\id$.
\end{itemize}
\end{thm}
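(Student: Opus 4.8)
The statement splits into two independent halves, and I would establish them separately. For the generation part — that $\tau$ and $B$ generate $\Bir(\PP^1\times\PP^1)$ — the plan is to transport the Noether--Castelnuovo reduction to the quadric. To each $f\in\Bir(\PP^1\times\PP^1)$ I would attach the linear system $f^*|\mathcal{O}(1,1)|$, recorded through its bidegree together with the multiplicities at its base points. The elements of $B$ preserve the first ruling while $\tau$ interchanges the two rulings, so using a Noether-type inequality one shows that, unless $f$ is already an automorphism, composing with a suitable de Jonqui\`eres map in $B$ and possibly with $\tau$ strictly lowers the complexity of the linear system. Iterating reduces $f$ to an element of $\Aut(\PP^1\times\PP^1)=(\PGL_2\times\PGL_2)\rtimes\langle\tau\rangle$, and since $\PGL_2\times\PGL_2\subset B$ this yields $\Bir(\PP^1\times\PP^1)=\langle B,\tau\rangle$.

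For completeness of the relations I would let $G$ be the group presented by $B$, $\tau$ and the listed relations, so that there is a canonical surjection $G\to\Bir(\PP^1\times\PP^1)$, and the task is to prove it injective. I would represent an element of $G$ by a word alternating elements of $B$ with $\tau$ and read such a word geometrically as a chain of elementary links between $\PP^1$-fibrations birational to $\PP^1\times\PP^1$: each $\tau$ switches the preserved ruling, and each $b\in B$ itself decomposes into elementary transformations of ruled surfaces followed by a fibred automorphism. A word representing the identity then corresponds to a loop in the resulting graph of fibrations, and the point is to contract this loop using the relations of $G$, by induction on the complexity of the associated linear systems.

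The hard part is the classification of the \emph{elementary relations} — the ways in which two reduced factorizations of one map can differ — and the verification that each of them follows from the listed relations. Most differences come from commuting links with disjoint centres, or from relations taking place inside a single $\Aut(\PP^1\times\PP^1)$ or a single copy of $B$; these are exactly the first item of the list. The genuinely new relations arise at special fibred surfaces, realized on a del Pezzo surface of degree $6$ (and, for the additive relation, a degeneration of it), where three elementary links close up into a triangle. The two possible configurations — the multiplicative (toric) one and the additive one — yield precisely the cubic relations $(\tau\circ(\frac1y,\frac yx))^3=\id$ and $(\tau\circ(-x,y-x))^3=\id$. The technical core, and the main obstacle, is exhaustiveness: proving that every loop of elementary links can be cut into these two triangles together with the $\Aut$- and $B$-internal relations, so that no further relation is required. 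This is what forces the delicate analysis of the degenerate fibres and of the exceptional configurations at which the links can branch.
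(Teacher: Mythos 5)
First, a point of reference: the paper does not prove this statement at all. Theorem~\ref{isk} is quoted from Iskovskikh's 1984 announcement as background, and the paper explicitly records that a gap in Iskovskikh's original proof was found and closed by Lamy in \cite{L10}. So there is no in-paper proof to match your argument against; your proposal has to stand on its own, measured against what is known to be required.

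Judged that way, your outline reproduces the correct global strategy (descent on the bidegree and base-point multiplicities of $f^*|\mathcal{O}(1,1)|$ for generation; modelling words in $\langle B,\tau\rangle$ as chains of links between $\PP^1$-fibrations and contracting loops for completeness), but it has a genuine gap, and you have located it yourself without filling it: the exhaustiveness claim that every loop of elementary links decomposes into relations internal to $\Aut(\PP^1\times\PP^1)$ and $B$ plus the two cubic relations. This is not a verification to be deferred --- it \emph{is} the theorem. Everything before it is soft: generation is a routine transport of Noether--Castelnuovo, and checking that the two listed cubic relations hold in $\Bir(\PP^1\times\PP^1)$ is a direct computation. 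The entire difficulty is showing no \emph{other} relations are needed, which demands a case analysis of how two factorizations of the same map through special positions of base points (base points on the same fibre, infinitely near points, the degenerate configurations producing the additive relation $(\tau\circ(-x,y-x))^3=\id$) can differ, together with a well-founded induction certifying that the complexity measure actually decreases through every exchange. Historically this is exactly where Iskovskikh's own published argument failed, and Lamy's repair is a substantial piece of work, not a bookkeeping step; your phrase ``the technical core, and the main obstacle, is exhaustiveness'' names the missing proof rather than supplying it. Your geometric gloss on where the two relations live (a multiplicative/toric triangle and an additive degeneration, tied to a degree-$6$ del Pezzo configuration) is a reasonable heuristic, but as stated it is an assertion about \emph{which} special configurations occur, again presupposing the unproven classification. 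As a roadmap the proposal is sound and consistent with the literature; as a proof it is incomplete precisely at the step the theorem exists to settle.
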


A gap in the original proof of Theorem \ref{isk} had been detected and closed by Lamy (\cite{L10}).
 
A presentation of $\Bir(\PP^2)$ in the form of a generalised amalgam was given in the following statement:

\begin{thm}[\cite{W92}]
The group $\Bir(\PP^2)$ is the free product of $\PGL_3(\kk)$, $\Aut(\PP^1\times\PP^1)$ and $ \JJ$ 
amalgamated along their pairwise intersections in $\Bir(\PP^2)$.
\end{thm}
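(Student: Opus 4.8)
The plan is to deduce this presentation from Blanc's theorem (Theorem~\ref{blancisk}), using $\Aut(\PP^1\times\PP^1)$ to absorb the single extra relation $\sigma\tau=\tau\sigma$ appearing there. Write $A=\PGL_3(\kk)$, $B=\JJ$ and $C=\Aut(\PP^1\times\PP^1)$, all regarded as subgroups of $\Bir(\PP^2)$ (the last via $\varpi$), and let $G$ denote the generalised amalgam, i.e.\ the colimit of the diagram formed by $A,B,C$ and their pairwise intersections. Denote by $j_A,j_B,j_C$ the canonical homomorphisms of the three factors into $G$; by construction $j_A,j_B,j_C$ agree on each pairwise intersection. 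There is an obvious homomorphism $\Phi\colon G\to\Bir(\PP^2)$ restricting to the inclusion on each factor, and it is surjective: by the Theorem of Noether and Castelnuovo $\Bir(\PP^2)$ is generated by $A$ and $\sigma\in B$, both of which lie in the image. The whole content is the injectivity of $\Phi$, which I would establish by constructing an explicit inverse.

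To build $\Psi\colon\Bir(\PP^2)\to G$ I would invoke Blanc's presentation. The maps $j_A\colon A\to G$ and $j_B\colon B\to G$ agree on $A\cap B$, so by the universal property of the amalgamated product they induce a homomorphism $A\ast_{A\cap B}B\to G$. To see that this descends through Blanc's relation I must check that $\sigma\tau=\tau\sigma$ holds in $G$. This is exactly the point at which the third factor is needed: a direct computation with $\varpi$ shows that both $\sigma$ and $\tau$ lie in $C$ (indeed $\varpi^{-1}\tau\varpi$ is the coordinate swap of $\PP^1\times\PP^1$ and $\varpi^{-1}\sigma\varpi$ acts as the involution on each factor), that they commute inside $C$, and that $\sigma\in B\cap C$ while $\tau\in A\cap C$. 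Hence in $G$ one has $j_B(\sigma)=j_C(\sigma)$ and $j_A(\tau)=j_C(\tau)$, and the commutation $j_C(\sigma)j_C(\tau)=j_C(\tau)j_C(\sigma)$ valid in the factor $C$ yields $j_A(\tau)j_B(\sigma)=j_B(\sigma)j_A(\tau)$. Thus $\sigma\tau(\tau\sigma)^{-1}$ maps to the identity of $G$, and $\Psi$ is well defined.

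It remains to verify that $\Phi$ and $\Psi$ are mutually inverse. The identity $\Phi\circ\Psi=\id$ is immediate on the generating subgroups $A$ and $B$. For $\Psi\circ\Phi=\id$ I check equality on each factor of $G$; on $A$ and $B$ it holds by construction, so the only issue is the factor $C$. Here I would prove the key lemma that $C=\Aut(\PP^1\times\PP^1)$ is generated by $A\cap C$ and $B\cap C$: under $\varpi$ the pencil through $[1:0:0]$ becomes one of the two rulings, so $B\cap C$ is the identity component $\PGL_2(\kk)\times\PGL_2(\kk)$, whereas $A\cap C$ contains the swap $\tau$, and these together generate all of $C$. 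Granting this, writing $c\in C$ as a word $\prod_i x_iy_i$ with $x_i\in A\cap C$ and $y_i\in B\cap C$ and using $j_A|_{A\cap C}=j_C|_{A\cap C}$ and $j_B|_{B\cap C}=j_C|_{B\cap C}$, the homomorphism $\Psi$ sends $\Phi(c)=c$ back to $\prod_i j_A(x_i)j_B(y_i)=j_C(c)$, so $\Psi\circ\Phi$ fixes $C$ as well.

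I expect the generation lemma for $\Aut(\PP^1\times\PP^1)$---together with the explicit identification of the three pairwise intersections as subgroups of $\Bir(\PP^2)$---to be the main obstacle, since it requires a careful analysis of which automorphisms of $\PP^1\times\PP^1$ are linear on $\PP^2$ and which are de Jonqui\`eres. Everything else is formal manipulation of universal properties, and the role of Blanc's relation $\sigma\tau=\tau\sigma$ is precisely to record that the overlap between $\Aut(\PP^1\times\PP^1)$ and the two other factors is already forced by the amalgam structure.
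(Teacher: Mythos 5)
Your argument is correct, but be aware that the paper contains no proof of this statement to compare against: it is quoted from Wright \cite{W92} as background in the overview of previous presentations. So what you have produced is necessarily a different route from the original. Wright proves the generalised amalgam structure directly, by constructing a two-dimensional simplicial complex on which $\Bir(\PP^2)$ acts and extracting the presentation from that action; his proof is independent of, and predates by two decades, Blanc's Theorem~\ref{blancisk}, which you take as the starting point. Within the logical framework of the present paper, which also assumes Theorem~\ref{blancisk} (whose proof goes back to Iskovskikh--Lamy, not to Wright), your deduction is legitimate and non-circular, and all the steps you flagged do check out: under $\varpi$ the first ruling of $\PP^1\times\PP^1$ corresponds to the pencil of lines through $[1:0:0]$ (the fibre over $[u_0:u_1]$ maps to the line $u_1y=u_0z$), so $B\cap C$ is exactly $\PGL_2(\kk)\times\PGL_2(\kk)$; one computes $\varpi^{-1}\sigma\varpi\colon([u_0:u_1],[v_0:v_1])\mapsto([u_1:u_0],[v_1:v_0])$ and $\varpi^{-1}\tau\varpi$ is the swap of the factors, which commute, giving $\sigma\in B\cap C$, $\tau\in A\cap C$ and hence the relation $j_B(\sigma)j_A(\tau)=j_C(\sigma\tau)=j_C(\tau\sigma)=j_A(\tau)j_B(\sigma)$ in $G$; and since $C\simeq(\PGL_2(\kk)\times\PGL_2(\kk))\rtimes\langle\text{swap}\rangle$, the generation lemma you anticipated as the main obstacle is in fact immediate, so $\Psi\circ\Phi$ fixes $j_C(C)$ and therefore all of $G$, the images of the three factors generating the colimit. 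The comparison, then: Wright's approach yields an independent proof together with a geometric object (the complex) of intrinsic interest, while yours is a short formal argument showing that, granted Blanc's presentation, the third factor $\Aut(\PP^1\times\PP^1)$ serves precisely to absorb the single relation $\sigma\tau=\tau\sigma$ --- a clean observation, but one that could not replace Wright's proof outside a context where \cite{B12} is already available.
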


In \cite{BF13} the authors introduced the Euclidean topology on the Cremona group over a locally compact local field. With respect to this topology, $\Bir(\PP^2)$ is a Hausdorff topological group and the restriction of the Euclidean topology to any algebraic subgroup is the classical Euclidean topology. In order to show that $\Bir(\PP^2)$ is compactly presentable with respect to the Euclidean topology, Zimmermann proved the following:

\begin{thm}[\cite{Z16}]
The Cremona group $\Bir(\PP^2)$ is isomorphic to the amalgamated product of $\Aut(\PP^2)$, $\Aut(\mathbb{F}_2)$, $\Aut(\PP^1\times\PP^1)$ along their pairwise intersection in $\Bir(\PP^2)$ modulo the relation $\tau\sigma\tau\sigma$, where $\tau\in\Aut(\PP^2)$ is the coordinate permutation $\tau\colon [x:y:z]\mapsto [z:y:x]$.
\end{thm}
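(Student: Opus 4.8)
The plan is to realise the three automorphism groups as explicit subgroups of $\Bir(\PP^2)$, produce a surjection from the proposed amalgam onto $\Bir(\PP^2)$, and then obtain injectivity geometrically, comparing with the presentations of Blanc (Theorem~\ref{blancisk}) and Wright \cite{W92}. First I would fix birational identifications carrying each surface to $\PP^2$: the contraction $\mathbb{F}_1\to\PP^2$ of a $(-1)$-curve together with the elementary link $\mathbb{F}_1\dashrightarrow\mathbb{F}_2$ for $\Aut(\mathbb{F}_2)$, and the map $\varpi$ of the introduction for $\Aut(\PP^1\times\PP^1)$. A direct computation shows that $\varpi^{-1}\sigma\varpi$ is the automorphism $([u_0:u_1],[v_0:v_1])\mapsto([u_1:u_0],[v_1:v_0])$ of $\PP^1\times\PP^1$, so that $\sigma\in\Aut(\PP^1\times\PP^1)$ under this embedding; together with $\PGL_3=\Aut(\PP^2)$, the Theorem of Noether--Castelnuovo then gives that the three groups generate $\Bir(\PP^2)$. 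Checking that the amalgamation relations and $\tau\sigma\tau\sigma=\id$ hold in $\Bir(\PP^2)$ --- the latter being the computation that $(\tau\sigma)^2=\id$ for $\tau=[z:y:x]$ --- yields a surjection $\Phi$ from the amalgam onto $\Bir(\PP^2)$.

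For injectivity I would let $\Bir(\PP^2)$ act on the $2$-dimensional complex $X$ whose vertices are the rational Mori fibre spaces reachable from $\PP^2$ (namely $\PP^2$ and the ruled surfaces $\mathbb{F}_n$, $\PP^1\times\PP^1$), whose edges are the Sarkisov links between them, and whose $2$-cells record the elementary relations among these links. Connectedness of $X$ is Noether--Castelnuovo together with the Sarkisov program; granting in addition that $X$ is simply connected, $\Bir(\PP^2)$ is the fundamental group of the associated complex of groups, hence the amalgam of the vertex stabilisers (the automorphism groups of the surfaces) along the edge stabilisers (their pairwise intersections).

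It then remains to reduce the infinitely many ruled vertices to the two representatives $\mathbb{F}_2$ and $\PP^1\times\PP^1$ and to pin down the single surviving relation. Here I would compare with Wright's theorem \cite{W92}, which is the same amalgam but with the full de Jonqui\`eres group $\JJ$ in place of $\Aut(\mathbb{F}_2)$; since $\Aut(\mathbb{F}_2)$ is a proper subgroup of $\JJ$, the task is to re-express the degree-raising de Jonqui\`eres transformations, and the relations among them that Wright keeps inside one vertex group, using $\Aut(\mathbb{F}_2)$, $\Aut(\PP^1\times\PP^1)$, $\PGL_3$ and their intersections. Tracking this rewriting along the links of $X$, every such relation becomes a consequence of the relations internal to the three groups except for one residue; using $\sigma^2=\tau^2=\id$ this residue is exactly $\tau\sigma\tau\sigma=\id$, which is the analogue, now for the transposition $\tau=[z:y:x]$, of Blanc's single relation $\sigma\tau=\tau\sigma$ of Theorem~\ref{blancisk}.

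The main obstacle is precisely this completeness of the relations, equivalently the simple connectivity of $X$ and the injectivity of $\Phi$: one must verify that no loop in $X$ fails to bound a disc assembled from the listed relations. Concretely this means classifying all elementary relations among Sarkisov links between $\PP^2$, $\mathbb{F}_2$ and $\PP^1\times\PP^1$ and checking each is generated by relations inside the three groups together with $\tau\sigma\tau\sigma=\id$. The delicate bookkeeping is the passage from $\JJ$ to its proper subgroup $\Aut(\mathbb{F}_2)$: one must confirm that shrinking this vertex group, which discards the higher-degree de Jonqui\`eres transformations from a single factor, costs exactly the one relation $\tau\sigma\tau\sigma=\id$ and nothing more.
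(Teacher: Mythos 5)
Your preparatory steps are fine: the computation $\varpi^{-1}\sigma\varpi\colon([u_0:u_1],[v_0:v_1])\mapsto([u_1:u_0],[v_1:v_0])$ is correct, so together with Noether--Castelnuovo the three groups generate $\Bir(\PP^2)$, and $\tau\sigma\tau=\sigma$ gives $\tau\sigma\tau\sigma=\id$ in $\Bir(\PP^2)$; hence the surjection $\Phi$ exists. The genuine gap is that everything after this is conceded rather than proved, and at the decisive point your outline proves too much as stated. If your complex $X$ were simply connected with vertex stabilisers conjugate to $\Aut(\PP^2)$, $\Aut(\mathbb{F}_2)$, $\Aut(\PP^1\times\PP^1)$ and with $2$-cells imposing only the amalgamation data, then the complex-of-groups machinery would output the \emph{plain} amalgam along pairwise intersections with no extra relation --- but the theorem's ``modulo $\tau\sigma\tau\sigma$'' says precisely that $\Bir(\PP^2)$ is a proper quotient of that amalgam, so such an $X$ cannot exist. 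In the complexes that do exist (Wright's, which underlies \cite{W92}), the stabiliser of a ruled vertex is the full de Jonqui\`eres group $\JJ$ (the stabiliser of the pencil), not $\Aut(\mathbb{F}_2)$; shrinking this vertex group destroys either the stabiliser description or the simple connectivity, and you never locate the loop in $X$, or the $2$-cell, that carries $(\tau\sigma)^2$. (It must be a genuinely mixed cell: $\tau=[z:y:x]$ does not lie in the edge group $\Aut(\PP^2)\cap\varpi\Aut(\PP^1\times\PP^1)\varpi^{-1}$, since $\varpi^{-1}\tau\varpi\colon([u_0:u_1],[v_0:v_1])\mapsto([u_0v_1:u_1v_0],[v_1:v_0])$ is not an automorphism of $\PP^1\times\PP^1$, so the word $\tau\sigma\tau\sigma$ cannot be absorbed into a single factor.) Likewise the collapse of the infinitely many vertices $\mathbb{F}_n$ to the two representatives $\mathbb{F}_2$ and $\PP^1\times\PP^1$, and your final claim that the rewriting from $\JJ$ to $\Aut(\mathbb{F}_2)$ leaves ``exactly the one residue $\tau\sigma\tau\sigma$,'' are restatements of the theorem, not arguments: that completeness claim \emph{is} the injectivity of $\Phi$.

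For comparison, the cited proof in \cite{Z16} avoids global topology altogether and follows the scheme this paper also uses for its Main Theorem: one starts from Blanc's presentation (Theorem~\ref{blancisk}) of $\Bir(\PP^2)$ as $\PGL_3(\kk)\ast_{\PGL_3(\kk)\cap\JJ}\JJ$ modulo $\sigma\tau=\tau\sigma$, builds homomorphisms in both directions via the universal property of the amalgam, and proves well-definedness by an induction on the degrees of the linear systems $\Lambda_i$ of the partial compositions, after first normalising away infinitely near base-points --- the analogue of Lemmas~\ref{lem:no bp 1} and~\ref{lem:no bp 2} and Proposition~\ref{prop:relations J} here. That degree induction is a finite, word-by-word verification replacing your simple-connectivity assumption. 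To make your route rigorous you would have to define $X$ precisely, prove its simple connectivity, and classify all elementary relations among the Sarkisov links between $\PP^2$, the $\mathbb{F}_n$ and $\PP^1\times\PP^1$, checking that each is generated by the listed relations together with $\tau\sigma\tau\sigma=\id$; none of this is easier than the linear-system induction, and as written your proposal defers exactly these steps.
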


We would also like to mention the paper \cite{IKT94} by Iskovskikh, Kabdykairov and Tregub, in which the authors present a list of generators and relations of $\Bir(\PP^2)$ over arbitrary perfect fields.

\subsection{Gizatullin-homomorphisms between Cremona groups}\label{gizhom}
In this Section we recall a result of Gizatullin in order to illustrate how our presentation can be used for explicit calculations. Throughout Section \ref{gizhom} we assume $\kk$ to be algebraically closed and of characteristic $\neq 2$. In \cite{G90}, Gizatullin considers the following question: Can a given group-homomorphism $\varphi\colon\PGL_3(\kk)\to\PGL_{n+1}(\kk)$ be extended to a group-homomorphism $\Phi\colon\Cr_2(\kk)\to\Cr_n(\kk)$? He answers this question positively if $\varphi$ is the projective representation induced by the regular action of $\PGL_3(\kk)$ on the space of plane conics, plane cubics or plane quartics. In order to construct these homomorphisms he uses the following construction. Let $\Sym_n$ be the $\kk$-algebra of symmetric $n\times n$-matrices and define the variety $\mathbb{S}_2(n)$ to be the quotient $(\Sym_n)^3//\GL_n(\kk)$ where the regular action of $\GL_n(\kk)$ is given by $C\cdot(A_1, A_2, A_3)=(CA_1C^T, CA_2C^T, CA_3C^T)$. 

\begin{lem}\label{rational}
	The variety $\mathbb{S}_2(n)$ is a rational variety of dimension $(n+1)(n+2)/2-1$.
\end{lem}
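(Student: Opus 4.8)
The plan is to obtain the dimension from a stabiliser computation, and then to prove rationality by normalising the three forms one after another, reducing to a finite quotient.

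\emph{Dimension.} First I would compute the generic stabiliser of the $\GL_n(\kk)$-action on $(\Sym_n)^3$. A matrix $C$ acts trivially on all of $\Sym_n$ iff $CAC^T=A$ for every symmetric $A$, i.e. $C=\pm\Id$. In a normal form $(\Id,D,B)$ as produced below, with $D$ diagonal with pairwise distinct entries and $B$ generic, the equations $C\Id C^T=\Id$ and $CDC^{-1}=D$ force $C=\mathrm{diag}(\pm1,\dots,\pm1)$, and $CBC^{-1}=B$ with all $B_{ij}\neq0$ then forces all signs equal; hence the generic stabiliser is the finite group $\{\pm\Id\}$. The action therefore factors through a generically free action of $\GL_n(\kk)/\{\pm\Id\}$, and
\[
\dim\mathbb{S}_2(n)=3\dim\Sym_n-\dim\GL_n=3\cdot\tfrac{n(n+1)}2-n^2=\tfrac{n(n+3)}2=\tfrac{(n+1)(n+2)}2-1 .
\]

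\emph{Normalising $A_1$.} Since only the field of invariants $\kk((\Sym_n)^3)^{\GL_n(\kk)}$ matters, I would restrict to the dense open set where $A_1$ is invertible. As $\kk$ is algebraically closed of characteristic $\neq2$, every nondegenerate symmetric form is congruent to $\Id$, so this open set is the associated bundle $\GL_n(\kk)\times_{\On_n(\kk)}(\Sym_n)^2$; dividing by $\GL_n(\kk)$ identifies $\mathbb{S}_2(n)$ birationally with $(\Sym_n)^2\mmod\On_n(\kk)$, where $\On_n(\kk)$ acts by simultaneous conjugation (for orthogonal $C$ one has $CAC^T=CAC^{-1}$).

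\emph{Diagonalising $A_2$ and a no-name reduction.} A generic symmetric $A_2$ has pairwise distinct eigenvalues and non-isotropic eigenvectors, hence is conjugate by an orthogonal matrix to a diagonal matrix $D$; the diagonalisation is unique up to reordering the eigenvalues and up to orthogonal diagonal matrices, so the residual group is the group $W$ of signed permutation matrices (the Weyl group of type $B_n$). This reduces the problem to the rationality of $(\A^n\times\Sym_n)/W$, where $W$ permutes the eigenvalues $\lambda_i\in\A^n$ together with the rows and columns of $B=A_3$, while the diagonal sign matrices act by $B_{ij}\mapsto\varepsilon_i\varepsilon_jB_{ij}$. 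The effective group $\bar W:=W/\{\pm\Id\}$ acts generically freely on the space $\Lambda\cong\A^{\binom n2}$ of off-diagonal entries of $B$ (the kernel $\{\pm\Id\}$ acts trivially on the whole slice), while the eigenvalues and the diagonal of $B$ form a complementary linear $\bar W$-representation $\A^{2n}$. By the no-name lemma, $(\A^n\times\Sym_n)/W\sim(\Lambda/\bar W)\times\A^{2n}$, so everything comes down to the rationality of $\Lambda/\bar W$.

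\emph{The main obstacle.} The quotient $\Lambda/\bar W$ of the edge variables $B_{ij}$ by the monomial action of $\bar W$ is the heart of the matter. Its two constituents are individually harmless: the sign subgroup $(\Z/2)^n$ acts diagonally, so $\Lambda/(\Z/2)^n$ is a toric, hence rational, variety; and a permutation action of the symmetric group over a locus of distinct coordinates has rational quotient by the no-name lemma. The difficulty, which I expect to be the main obstacle, is that the sign group and the symmetric group have \emph{complementary} free loci, so these two reductions cannot be chained in an obvious way. One concrete route is to pass first to $\Lambda/(\Z/2)^n$ and then to kill the residual $S_n$ using the sign-invariant, freely $S_n$-permuted quantities $\rho_i=\sum_{j\neq i}B_{ij}^2$; making the resulting fibration over $\A^n_{\rho}/S_n$ genuinely a (birationally trivial) affine-space bundle is exactly the delicate point. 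For the small values of $n$ relevant to Gizatullin's construction this last step is transparent: for $n=3$, for instance, $\bar W$ is $S_4$ acting on $\A^3$ as its reflection representation, so by the Chevalley--Shephard--Todd theorem $\Lambda/\bar W\cong\A^3$ outright.
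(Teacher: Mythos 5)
Your dimension count and your first two normalisation steps are exactly those of the paper's proof: the paper likewise restricts to the open set where $A_1$ and $A_2$ are invertible, writes $A_1=CC^T$ to normalise $A_1=\Id$, orthogonally diagonalises $A_2$, and thereby reduces to a slice $V=\{(\Id,d,A)\}$, with $d$ diagonal with pairwise distinct entries, modulo a finite residual group. The divergence is in what that residual group is, and this is the crux. The paper takes it to be the centraliser $G\cong(\Z/2)^n$ of $d$ in $\On_n(\kk)$, i.e. the diagonal sign matrices, which is abelian of exponent $2$, and concludes immediately by Fischer's theorem; there is no analogue of your $\Lambda/\bar W$ analysis at all. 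You instead keep the full signed permutation group $W$, on the correct grounds that orthogonal diagonalisation is unique only up to reordering the eigenvalues, and then you cannot finish: your final paragraph leaves the rationality of $\Lambda/\bar W$ unproved for general $n$ (the $\rho_i$ suggestion is a heuristic, and the Chevalley--Shephard--Todd argument only covers $n=3$; note also that your no-name reduction requires $\bar W$ to act generically freely on $\Lambda$, which fails for $n=2$, so that case would need separate treatment). As a proof of the lemma as stated, for all $n$, your proposal therefore has a genuine gap.

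That said, your identification of the residual group is the more careful one. In the slice $V$ the matrix $d$ varies, and for a nontrivial permutation matrix $P$ the two points $(\Id,d,A)$ and $(\Id,PdP^T,PAP^T)$ of $V$ have the same image in $\mathbb{S}_2(n)$ without lying in the same $G$-orbit; the fibres of $\pi|_V$ are thus generically orbits of the signed permutation group $W$, and the map $V/G\to\pi(V)$ is generically $n!$-to-$1$ rather than birational. So the paper's assertion that $\pi|_V$ is the quotient map for the $G$-action does not, as literally stated, account for this permutation ambiguity, and the step at which you got stuck is not one you failed to extract from the paper --- it is precisely the step the paper's two-line conclusion via Fischer's theorem passes over. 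To complete an argument along these lines one must genuinely carry out the $S_n$-descent, e.g. by fibering $V/W$ over $\A^n/S_n\cong\A^n$ and showing the twisted fibre is birationally trivial, which is exactly your ``delicate point''; until that is done (or the paper's claim is repaired), neither your write-up nor the paper's contains a complete proof of rationality for all $n$.
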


\begin{proof}
	The dimension of $(\Sym_n)^3$ is $3n(n+1)/2$. Since the dimension of $\GL_n(\kk)$ is $n^2$ and the action of $\GL_n(\kk)$ on $(\Sym_n)^3$ has finite kernel, the dimension of $\mathbb{S}_2(n)$ is $(n+1)(n+2)/2-1$.
	
	Define the subvariety $X\subset (\Sym_n)^3$ given by triplets of the form $(\id, d, A)$, where $d\in D_2$ and consider the following maps:
	\[
	X\hookrightarrow (\Sym_n)^3\xrightarrow{\pi} \mathbb{S}_2(n).
	\]
	Let $U\subset(\Sym_n)^3$ be the open dense subset of triplets of the form $(A_1, A_2, A_3)$ such that $A_1$ and $A_2$ are invertible. We note that $\pi(X)=\pi(U)$. Indeed, every symmetric element $A_1\in\GL_n(\kk)$ is of the form $CC^T$ for some $C\in\GL_n(\kk)$ and for every symmetric matrix $A_2\in \GL_n(\kk)$ there exists an orthogonal matrix $S\in\On_n(\kk)$ such that $SA_2S^T$ is diagonal. Consider the open dense subset $V\subset X$ of elements of the form $(\id, d, A)$, where $A\in\Sym_n$ and $d=(d_{ij})$ an element in $D_2$ such that $d_{ii}\neq d_{jj}$ for all $i\neq j$. This condition ensures that the centralizer $G$ of $d$ in the orthogonal group $\On_n(\kk)$ consists only of diagonal elements. The group $G$ is therefore an abelian group of exponent $2$. A Theorem of Fischer (\cite{F15}) states that the quotient of $\A^N$ by $G$ is a rational variety. The image $\pi(V)$ is still dense in $\mathbb{S}_2(n)$ and the restriction of $\pi$ to $V$ is the quotient map of the action of $G$ on $V$ given by conjugation. The theorem of Fischer therefore implies that $ \mathbb{S}_2(n)$ is rational.
\end{proof}
Note that Lemma \ref{rational} implies that $\mathbb{S}_2(n)$ has the same dimension as the space of all plane curves of degree $n$.

A linear transformation $g=[g_1:g_2:g_3]\in\PGL_3(\kk)$ induces an automorphism on $(\Sym_n)^3$ by  $g(A_1, A_2, A_3):=(g_1(A_1, A_2, A_3), g_2(A_1, A_2, A_3), g_3(A_1, A_2, A_3))$. This automorphism commutes with the action of $\GL_n(\kk)$, so we obtain a regular action of $\PGL_3(\kk)$ on $\mathbb{S}_2(n)$. Our main theorem allows now to give a short proof of the following result of Gizatullin:

\begin{prop}[\cite{G90}]\label{giz}
	This regular action of $\PGL_3(\kk)$ extends to a rational action of $\Bir(\PP^2)$ on $\mathbb{S}_2(n)$. 
\end{prop}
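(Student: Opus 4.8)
The plan is to deduce the statement directly from the Main Theorem. Since $\mathbb{S}_2(n)$ is rational by Lemma~\ref{rational}, giving a rational action of $\Bir(\PP^2)$ on $\mathbb{S}_2(n)$ that extends the regular $\PGL_3(\kk)$-action $\rho$ amounts to producing a group homomorphism $\Phi\colon\Bir(\PP^2)\to\Bir(\mathbb{S}_2(n))$ whose restriction to $\PGL_3(\kk)$ is $\rho$. By the presentation of the Main Theorem, the group $\Bir(\PP^2)$ is generated by $\PGL_3(\kk)$ and $\sigma$ subject to the relations (\ref{rel1})--(\ref{rel5}). Hence, to define $\Phi$ it suffices to choose a birational involution $\tilde\sigma\in\Bir(\mathbb{S}_2(n))$ to be the image of $\sigma$ and to check that $\rho$ together with $\sigma\mapsto\tilde\sigma$ respects the five families of relations; the universal property of the presentation then yields a well-defined $\Phi$.

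I would first construct $\tilde\sigma$ explicitly on the dense open subset of $\mathbb{S}_2(n)$ of classes of triples that, after the normalisation used in the proof of Lemma~\ref{rational}, have the form $(\id,d,A)$ with $d\in D_2$ and $A\in\Sym_n$; the recipe should reduce, for $n=1$, to the standard quadratic involution on $\mathbb{S}_2(1)\cong\PP^2$, so that $\tilde\sigma$ is modelled on the coordinate-wise inversion $[yz:xz:xy]=[1/x:1/y:1/z]$. Relation (\ref{rel1}) holds automatically because $\rho$ is already a group homomorphism, and relation (\ref{rel2}) is built into $\tilde\sigma$ being an involution. For (\ref{rel3}) and (\ref{rel4}) I would use that $\sigma$, the coordinate permutations $S_3$ and the diagonal automorphisms $D_2$ together generate the monomial group $\Aut(\kk^*\times\kk^*)\simeq\GL_2(\Z)\ltimes D_2$, as noted after the Main Theorem; on the normalised representatives the images $\rho(\tau)$, $\rho(d)$ and $\tilde\sigma$ act through this toric picture, so verifying $\tilde\sigma\,\rho(\tau)=\rho(\tau)\,\tilde\sigma$ for $\tau\in S_3$ and $\tilde\sigma\,\rho(d)\,\tilde\sigma\,\rho(d)=\id$ for $d\in D_2$ becomes a direct computation with diagonal scalings and with the sign changes making up the finite group $G$ of Fischer's theorem.

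The main obstacle is relation (\ref{rel5}), namely $(\tilde\sigma\,\rho(h))^3=\id$ with $h=[z-x:z-y:z]$, because $h$ is neither diagonal nor a permutation, so the convenient monomial normalisation is of little help. Here I would restrict to a suitable dense open subset, write out the birational map $\tilde\sigma\,\rho(h)$ in explicit coordinates on $\mathbb{S}_2(n)$, and check that its third power is the identity as a birational self-map. As the paper points out, relation (\ref{rel5}) lives in the subgroup $\Aut(\PP^1\times\PP^1)^0\simeq\PGL_2(\kk)\times\PGL_2(\kk)$ realised inside $\Bir(\PP^2)$ via $\varpi$, so I would try to organise the computation through this product structure in order to keep it manageable. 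I expect this verification of the cubic relation to be the technical heart of the argument; once it is settled, the homomorphism $\Phi$ exists and provides the desired rational action of $\Bir(\PP^2)$ on $\mathbb{S}_2(n)$.
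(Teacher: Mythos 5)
Your reduction is the same as the paper's, and it is essentially forced by the Main Theorem: exhibit an involution $\tilde\sigma\in\Bir(\mathbb{S}_2(n))$ extending $\sigma$ and check relations (\ref{rel1})--(\ref{rel5}). But your proposal stops exactly where the proof begins, because you never write down $\tilde\sigma$. The paper's entire content is the explicit choice: componentwise matrix inversion $(A_1,A_2,A_3)\dashmapsto(A_1^{-1},A_2^{-1},A_3^{-1})$, defined on $(\Sym_n)^3$ where its $\GL_n(\kk)$-equivariance is manifest, since $(CA_iC^T)^{-1}=(C^T)^{-1}A_i^{-1}C^{-1}$ is of the form $C'A_i^{-1}(C')^T$ with $C'=(C^T)^{-1}$, so the map descends to the quotient. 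Your alternative of defining $\tilde\sigma$ only on the normalised representatives $(\id,d,A)$ from the proof of Lemma~\ref{rational} creates a genuine extra burden (well-definedness against the residual action of the finite group $G$, and the fact that the normalisation is only generic), all of which the equivariant definition avoids; and your toric verification of (\ref{rel3})--(\ref{rel4}) presupposes this unconstructed map. Once the inversion formula is on the table, (\ref{rel1})--(\ref{rel4}) really are immediate, as you anticipate.

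The more serious problem is relation (\ref{rel5}). Your plan to organise that computation ``through the product structure'' of $\Aut(\PP^1\times\PP^1)^0$ misreads the remark in the introduction: that observation locates where relation (\ref{rel5}) comes from inside $\Bir(\PP^2)$, but it induces no product structure on $\mathbb{S}_2(n)$, where $h$ acts by $(A_1,A_2,A_3)\mapsto(A_3-A_1,A_3-A_2,A_3)$, so there is nothing for the $\PGL_2(\kk)\times\PGL_2(\kk)$ picture to act on and that route would not get off the ground. With the inversion formula the check is short and direct: since $h^2=\id$ and $\tilde\sigma^2=\id$, relation (\ref{rel5}) is equivalent to $\sigma h\sigma=h\sigma h$; evaluating both sides on a general triple, and normalising the third entry of $h\sigma h(A_1,A_2,A_3)$ from $A_3^{-1}$ back to $A_3$ by acting with $C=A_3$ (symmetric and invertible for a general triple), reduces the equality in each of the first two components to Hua's identity
\[(A_3^{-1}-A_1^{-1})^{-1}=A_3-A_3(A_3-A_1)^{-1}A_3,\]
which is what the paper verifies. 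Without the explicit $\tilde\sigma$ this identity --- the ``technical heart'' you defer --- is not merely unverified but inaccessible, so as it stands your argument has a genuine gap at both the construction of the map and the key relation.
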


\begin{proof}
	We define the birational action of the element $\sigma$ on $\mathbb{S}_2(n)$ by
	\[
	 (A_1, A_2, A_3)\dashmapsto (A_1^{-1}, A_2^{-1}, A_3^{-1}).
	\] 
	 In order to see that this indeed defines a rational action of $\Cr_2(\kk)$ on $\mathbb{S}_2(n)$ it is enough, by our main theorem, to check that the relations (\ref{rel1}) to (\ref{rel5}) are satisfied. The relations (\ref{rel1}) to (\ref{rel4}) are straightforward to check. For relation (\ref{rel5}) we calculate for general $A_1, A_2, A_3\in \Sym_n$:
	\[
	\sigma h \sigma (A_1, A_2, A_3)=((A_3^{-1}-A_1^{-1})^{-1},(A_3^{-1}-A_2^{-1})^{-1}, A_3 )
	\]
	and 
	\[
	h\sigma h (A_1, A_2, A_3)=(A_3^{-1}-(A_3-A_1)^{-1}, A_3^{-1}-(A_3-A_2)^{-1}, A_3^{-1})
	\]
	\[
	=(A_3-A_3(A_3-A_1)^{-1}A_3, A_3-A_3(A_3-A_2)^{-1}A_3, A_3)
	\]
One calculates that the two expressions are the same. Hence relation (\ref{rel5}) is satisfied.  
\end{proof}

Lemma \ref{rational} and Proposition \ref{giz} imply that there is a rational $\Cr_2(\kk)$-action on $\PP^N$ for all $N=(n+1)(n+2)/2-1$. In \cite{G90} Gizatullin shows using classical geometry that, for $n=2,3$ and $4$, the $\PGL_3(\kk)$-action on $\mathbb{S}_2(n)$ is conjugate to the $\PGL_3(\kk)$-action on the space of plane conics, plane cubics and plane quartics. It is an interesting question whether the $\PGL_3(\kk)$-actions on $\mathbb{S}_2(n)$ are conjugate to the $\PGL_3(\kk)$-actions on the space of plane curves of degree $n$. A positive answer would give  a rational action of $\Cr_2(\kk)$ on the space of all plane curves preserving degrees. It would also be interesting to look at the geometrical properties of these rational actions. The case of the rational action of $\Cr_2(\kk)$ on the space of plane conics has been studied in \cite{U16}.

\vskip\baselineskip
\noindent{\bf Acknowledgements:} The authors would like to express their warmest thanks to Serge Cantat and J\'er\'emy Blanc for many interesting discussions and for pointing out a mistake in the first version of the proof. They are also grateful to Marat Gizatullin,  Mattias Hemmig and Anne Lonjou for many discussions.

%%%%%%%%%%%%%%%%%%%%%%%%%%%%%%%%
\section{Useful relations}
Throughout all the sections we work over an algebraically closed field $\kk$.
%%%%%%%%%%%%%%%%%%%%%%%%%%%%%%%%
\subsection{Preliminaries}
There is a general formula for the degree of a composition of two Cremona transformations \cite[Corollary 4.2.12]{A02}, but the multiplicities of the base-points of the composition are hard to compute in general. However, if we compose an arbitrary birational map with a map of degree $2$ it is a rather straight forward calculation \cite[Proposition 4.2.5]{A02}. We will only use the formula for de Jonqui\`eres maps. For a given transformation $f\in\Bir(\PP^2)$, we denote by $m_p(f)$ the multiplicity of $f$ in the point $p$.

\begin{lem}\label{lem:composition}
Let $\tau,f\in\JJ$ be transformations of degree $2$ and $d$ respectively. Let $p_1,p_2$ be the base-points of $\tau$ different from $[1:0:0]$ and $q_1,q_2$ the base-points of $\tau^{-1}$ different from $[1:0:0]$ such that the pencil of lines through $p_i$ is sent by $\tau$ onto the pencil of lines through $q_i$. Then
\begin{align*}\deg(f\tau)&=d+1-m_{q_1}(f)-m_{q_2}(f)\\
m_{[1:0:0]}(f\tau)&=d-m_{q_1}(f)-m_{q_2}(f)=\deg(f\tau)-1\\
m_{p_i}(f\tau)&=1-m_{q_j}(f),\quad i\neq j.
\end{align*}
\end{lem}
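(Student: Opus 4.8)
The plan is to read off the degree and all multiplicities of $f\tau$ from the Picard group of a common resolution of $\tau$, that is, to compute the linear system $(f\tau)^*\mathcal{O}_{\PP^2}(1)=\tau^*\big(f^*\mathcal{O}_{\PP^2}(1)\big)$ explicitly. First I would resolve $\tau$: since $\tau$ is a quadratic de Jonqui\`eres map, blowing up its three base-points $[1:0:0],p_1,p_2$ produces a smooth surface $S$ with a morphism $\pi\colon S\to\PP^2$ onto the source, and the second projection $\pi'\colon S\to\PP^2$ contracts the strict transforms of the three lines joining pairs of base-points onto the three base-points of $\tau^{-1}$. Writing $H,E_0,E_1,E_2$ for the pullback of the line class and the exceptional divisors of $\pi$ (over $[1:0:0],p_1,p_2$), I would express the line class $H'$ of the target and the $\pi'$-exceptional curves in this basis, obtaining $H'=2H-E_0-E_1-E_2$ together with three relations of the form $H-E_i-E_j$.

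The main bookkeeping step here is to match the pencils correctly. I must check that the strict transform of the line through $p_1,p_2$ is contracted to $q_0:=[1:0:0]$, the line through $p_0,p_2$ to $q_1$, and the line through $p_0,p_1$ to $q_2$, so that the pencil of lines through $p_i$ is indeed sent by $\tau$ to the pencil through $q_i$, exactly as the statement requires. This amounts to computing the intersection numbers of the class $H-E_i$ with the three $\pi'$-exceptional curves.

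Next I would pull back the homaloidal system of $f$. The general member of $f^*\mathcal{O}_{\PP^2}(1)$ is a curve of degree $d$ through $q_0,q_1,q_2$ with multiplicities $m_{q_0}(f),m_{q_1}(f),m_{q_2}(f)$, so its strict transform on $S$ has class $dH'-m_{q_0}(f)F_0-m_{q_1}(f)F_1-m_{q_2}(f)F_2$, where $F_0,F_1,F_2$ are the $\pi'$-exceptional curves over $q_0,q_1,q_2$. Rewriting this class in the basis $H,E_0,E_1,E_2$ by means of the relations above and pushing it forward by $\pi$, the coefficient of $H$ computes $\deg(f\tau)$ and the coefficient of $E_i$ (up to sign) computes $m_{p_i}(f\tau)$; a direct substitution yields $\deg(f\tau)=2d-m_{q_0}(f)-m_{q_1}(f)-m_{q_2}(f)$, the centre multiplicity $d-m_{q_1}(f)-m_{q_2}(f)$, and the two outer multiplicities $d-m_{q_0}(f)-m_{q_j}(f)$.

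The final ingredient that turns these raw expressions into the stated formulas is that $f$ is itself a de Jonqui\`eres map of degree $d$, hence has multiplicity exactly $d-1$ at the centre $q_0=[1:0:0]$. Substituting $m_{q_0}(f)=d-1$ collapses the degree to $d+1-m_{q_1}(f)-m_{q_2}(f)$, leaves $m_{[1:0:0]}(f\tau)=d-m_{q_1}(f)-m_{q_2}(f)=\deg(f\tau)-1$ for the centre, and turns the remaining multiplicities into $m_{p_i}(f\tau)=1-m_{q_j}(f)$ for $i\neq j$. I expect the main obstacle to be not the arithmetic but the careful identification of the pencil correspondence $p_i\leftrightarrow q_i$, together with controlling the degenerate configurations in which $p_1,p_2$ (or $q_1,q_2$) are infinitely near or coincide with further base-points of $f$; in those cases the same strategy applies, but the relations among the exceptional classes must be set up on the correspondingly modified surface.
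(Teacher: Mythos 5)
Your proof is correct, and it arrives at the lemma by opening up the black box that the paper leaves closed: the paper's entire proof consists of the observation that a de Jonqui\`eres map of degree $d$ has multiplicity $d-1$ at $[1:0:0]$ and multiplicity $1$ at its other base-points, followed by a citation of \cite[Proposition 4.2.5]{A02} for the composition-with-a-quadratic-map formulas --- which are exactly what your resolution computation establishes. Your bookkeeping checks out: on the blow-up $S$ of $[1:0:0],p_1,p_2$, with $H'=2H-E_0-E_1-E_2$ and $F_0=H-E_1-E_2$, $F_1=H-E_0-E_2$, $F_2=H-E_0-E_1$, one has $H-E_i=H'-F_i$, which is precisely the pencil correspondence $p_i\leftrightarrow q_i$ required by the statement, and it is this matching (not the arithmetic) that produces the index swap $m_{p_i}(f\tau)=1-m_{q_j}(f)$, $i\neq j$; your raw formulas $\deg(f\tau)=2d-m_{q_0}(f)-m_{q_1}(f)-m_{q_2}(f)$, $m_{p_0}(f\tau)=d-m_{q_1}(f)-m_{q_2}(f)$, $m_{p_i}(f\tau)=d-m_{q_0}(f)-m_{q_j}(f)$ then collapse correctly under the substitution $m_{q_0}(f)=d-1$. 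The one respect in which the citation buys more than your computation is the degenerate case where $\tau$ or $\tau^{-1}$ has infinitely near base-points, since \cite[Proposition 4.2.5]{A02} is formulated for multiplicities in the bubble space; you flag this honestly, and the fix you sketch (redoing the class relations on the modified surface, where one of the $F_i$ becomes a class of the form $E_j-E_k$) does yield the same formulas, so nothing essential is missing. In exchange, your version is self-contained and makes visible exactly where each hypothesis of the lemma is used, which the two-line proof in the paper does not.
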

\begin{proof}
A de Jonqui\`eres map of degree $d$ has multiplicity $d-1$ in $[1:0:0]$ and multiplicity $1$ in every other of its base-points. The claim now follows from \cite[Proposition 4.2.5]{A02}.
\end{proof}

\begin{rmk}\label{rmk:composition}
Keep the notation of Lemma~\ref{lem:composition}. Let $\Lambda$ be the linear system of $f$. Then Lemma~\ref{lem:composition} translates to:
\begin{align*}\deg(f\tau)=\deg((\tau^{-1})(\Lambda))&=d+1-m_{q_1}(\Lambda)-m_{q_2}(\Lambda)\\
m_{[1:0:0]}((\tau^{-1})(\Lambda))&=d-m_{q_1}(\Lambda)-m_{q_2}(\Lambda)=\deg((\tau^{-1})(\Lambda))-1\\
m_{p_i}((\tau^{-1})(\Lambda))&=1-m_{q_j}(\Lambda),\quad i\neq j.
\end{align*}
In particular, as the multiplicity of $\Lambda$ in a point different from $[1:0:0]$ is zero or one, we obtain
\[\deg((\tau^{-1})(\Lambda))=\begin{cases}\deg(\Lambda)+1,&m_{q_1}(\Lambda)=m_{q_2}(\Lambda)=0\\ \deg(\Lambda),&m_{q_1}(\Lambda)+m_{q_2}(\Lambda)=1\\ \deg(\Lambda)-1,&m_{q_1}(\Lambda)=m_{q_2}(\Lambda)=1.\end{cases}\]
Note as well that B\'ezout theorem implies that $[1:0:0]$ and any two other base-points of $f$ are not collinear since $[1:0:0]$ is a base-point of multiplicity $d-1$ all other base-points are of multiplicity 1 and a general member of $\Lambda$ intersects a line in $d$ points counted with multiplicity.
\end{rmk}

\begin{notation}
We use the following picture to work with relations. If $f_1,\dots,f_n\in\{\sigma\}\cup\PGL_3(\kk)$ and $f_1\cdots f_n=1$ in the group $\langle\sigma,\PGL_3(\kk)\mid(\ref{rel1})-(\ref{rel5})\rangle$, we say that the commutative diagram
\[\xymatrix{\ar@{-->}[r]^{f_n}\ar@/_1pc/[rrrr]_{\text{Id}}&\ar@{-->}[r]^{f_{n-1}}&\ar@{..>}[r]&\ar@{-->}[r]^{f_1}&}\quad\text{or}\quad\xymatrix{\ar@{-->}[r]^{f_n}\ar@/_1pc/@{-->}[rrrr]_{f_1^{-1}}&\ar@{-->}[r]^{f_{n-1}}&\ar@{..>}[r]&\ar@{-->}[r]^{f_2}&}\]
is generated by relations (\ref{rel1})--(\ref{rel5}) and that the expression $f_1\cdots f_n=1$ is generated by relations (\ref{rel1})--(\ref{rel5}).
\end{notation}

%%%%%%%%%%%%%%%%%%%%%%%%%%%%%%%%%%%%%%%%%%%%%%%%
\subsection{Relations}
When dealing with quadratic maps, some relations among them appear quite naturally. We now take a closer look at a few of them and show that they are in fact generated by relations (\ref{rel1})--(\ref{rel5}). 

\begin{lem}\label{lem:deg1}
Let $g\in\PGL_3(\kk)\cap\JJ$ and suppose that the map $g':=\sigma g\sigma$ is linear. Then $g'=\sigma g\sigma$ is generated by relations $(\ref{rel1})$--$(\ref{rel4})$. In other words, the commutative diagram
\[\xymatrix{ \ar@{-->}[r]^{\sigma}\ar@/_1pc/[rrr]_{\alpha'} &\ar[r]^{g}&\ar@{-->}[r]^{\sigma}&}\]
%\[\xymatrix{ &\ar[r]^{\alpha}&\ar@{-->}[rd]^{\sigma}&\\
%\ar@{-->}[ru]^{\sigma} \ar[rrr]^{\alpha'}&&& }\]
is generated by relations $(\ref{rel1})$--$(\ref{rel4})$.
\end{lem}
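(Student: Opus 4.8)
The plan is to first extract the precise form of $g$ from the two hypotheses and then to read off the desired relation from (\ref{rel1}), (\ref{rel3}) and (\ref{rel4}). Since $g\in\PGL_3(\kk)$ lies in $\JJ$, it preserves the pencil of lines through $[1:0:0]$; as a linear map it carries this pencil to the pencil through $g([1:0:0])$, so necessarily $g([1:0:0])=[1:0:0]$.

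Next I would exploit the second hypothesis, that $g':=\sigma g\sigma$ is linear, by tracking base points. Rewrite the equality (which holds in $\Bir(\PP^2)$) as $\sigma g=g'\sigma$. The base points of $\sigma$ are the three coordinate points $[1:0:0]$, $[0:1:0]$, $[0:0:1]$, all proper and distinct. Since $g$ and $g'$ are automorphisms, the quadratic map $\sigma g$ has base points $g^{-1}(\{[1:0:0],[0:1:0],[0:0:1]\})$, whereas $g'\sigma$ has base points $\{[1:0:0],[0:1:0],[0:0:1]\}$; comparing the two sides forces $g$ to permute the three coordinate points. (No infinitely near points intervene, because the base points of $\sigma$ are proper and $g$ is linear.) Together with $g([1:0:0])=[1:0:0]$ this leaves exactly two possibilities: either $g$ fixes all three coordinate points, in which case its matrix is diagonal and $g=d\in D_2$; or $g$ fixes $[1:0:0]$ and interchanges $[0:1:0]$ and $[0:0:1]$, in which case $g\tau$ fixes all three coordinate points, where $\tau=[x:z:y]\in S_3$, so that $g\tau=d$ is diagonal and $g=d\tau$.

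It then remains to verify the relation in each case using only (\ref{rel1})--(\ref{rel4}). If $g=d\in D_2$, relation (\ref{rel4}) reads $\sigma d\sigma=d^{-1}$, and together with (\ref{rel1}) this is precisely $\sigma g\sigma=g'$. If $g=d\tau$, I would compute $\sigma g\sigma=\sigma d\tau\sigma$, replace $\tau\sigma$ by $\sigma\tau$ using (\ref{rel3}) to obtain $\sigma d\sigma\tau$, then apply (\ref{rel4}) to get $d^{-1}\tau$, and finally use (\ref{rel1}) to merge $d^{-1}\tau$ into the single linear generator $g'=d^{-1}\tau$. In both cases the equality $\sigma g\sigma=g'$ is thereby a consequence of (\ref{rel1})--(\ref{rel4}).

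The only substantive step is the base-point argument showing that linearity of $\sigma g\sigma$ forces $g$ to permute the base points of $\sigma$; once the normal form $g\in\{d,\,d\tau\}$ is in hand, relations (\ref{rel3}) and (\ref{rel4}) are exactly designed to conjugate diagonal and permutation maps through $\sigma$, and the manipulation is immediate. I expect the only point needing care to be the bookkeeping of base points, and in particular the verification that no $g$ outside the monomial group can occur, rather than anything in the final rewriting of relations.
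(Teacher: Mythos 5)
Your proof is correct and takes essentially the same route as the paper: both deduce from the linearity of $\sigma g\sigma$ that the base-points of $\sigma g$ coincide with the coordinate points, hence that $g$ (fixing $[1:0:0]$ as an element of $\JJ$) permutes $[0:1:0]$ and $[0:0:1]$, yielding the normal form $g=d\tau$ with $d\in D_2$, $\tau\in S_3\cap\JJ$, after which relations (\ref{rel1}), (\ref{rel3}) and (\ref{rel4}) give $\sigma g\sigma=d^{-1}\tau=g'$. The only cosmetic difference is that the paper invokes Lemma~\ref{lem:composition} for the base-point comparison while you argue directly from $\sigma g=g'\sigma$; the rewriting step is identical.
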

\begin{proof}
The map $\sigma g\sigma$ being linear means by Lemma~\ref{lem:composition} that the base points of $\sigma g$ are the same as the base points of $\sigma$, which are $\{[1:0:0],[0:1:0],[0:0:1]\}$. As $g\in\JJ$, it fixes $[1:0:0]$ and thus permutes the points $[0:1:0],[0:0:1]$. Hence $g=d\tau$ for some $\tau\in S_3\cap\JJ$ and $d\in D_2$. Using our relations we obtain
\[\sigma g\sigma\stackrel{(\ref{rel1})}=\sigma d \tau\sigma\stackrel{(\ref{rel3}),(\ref{rel4})}=d^{-1}\tau=g'.\]
\end{proof}

\begin{lem}\label{lem:deg2}
Let $g\in\PGL_3(\kk)\cap\JJ$ such that $\deg(\sigma g\sigma)=2$. Suppose that no three of the base-points of $\sigma$ and $\sigma g$ are collinear. Then there exist $g',g''\in\PGL_3(\kk)\cap\JJ$ such that $\sigma g\sigma=g'\sigma g''$ and this expression is generated by relations $(\ref{rel1})$--$(\ref{rel5})$. In other words, the commutative diagram
\[\xymatrix{ &\ar[r]^{g}&\ar@{-->}[rd]^{\sigma}&\\
\ar@{-->}[ru]^{\sigma} \ar[r]^(.75){g''}&\ar@{-->}[r]^{\sigma}&\ar[r]^(.3){g'}& }\]
is generated by relations $(\ref{rel1})$--$(\ref{rel5})$.
\end{lem}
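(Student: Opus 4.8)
The plan is to reduce to the single model $g=h$, for which the statement is exactly relation (\ref{rel5}): since $h=[z-x:z-y:z]$ is an involution, $(\sigma h)^3=\id$ rewrites as $\sigma h\sigma=h\sigma h$, which is already of the desired shape $g'\sigma g''$. Everything else is the geometric work of conjugating a general $g$ to $h$ by monomial maps, which can be pushed across $\sigma$ by the earlier lemmas.

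First I would read off the base-points. Write $e_0=[1:0:0]$, $e_1=[0:1:0]$, $e_2=[0:0:1]$ for the base-points of $\sigma$. As $g\in\JJ$ fixes $e_0$, the base-points of $\sigma g$ are $e_0$, $g^{-1}(e_1)$, $g^{-1}(e_2)$. Applying Lemma~\ref{lem:composition} with $f=\sigma g$ and $\tau=\sigma$ (so that $q_1,q_2$ are $e_1,e_2$), the hypothesis $\deg(\sigma g\sigma)=2$ becomes $m_{e_1}(\sigma g)+m_{e_2}(\sigma g)=1$; that is, exactly one of $e_1,e_2$ is a common base-point of $\sigma$ and $\sigma g$, while the remaining base-point $s$ of $\sigma g$ is not a coordinate point. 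The hypothesis that no three base-points of $\sigma$ and $\sigma g$ are collinear, applied to $s$ together with two of the $e_i$, says precisely that $s$ lies on none of the three coordinate lines, i.e. that $s$ is in general position.

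Next I would conjugate $g$ to $h$ by monomial maps. Note that $h$ fixes $e_0$ and $e_1$ and sends $e_2$ to the frame point $[1:1:1]$, so the base-points of $\sigma h$ are $e_0,e_1,[1:1:1]$, a configuration in general position. Since the common coordinate base-point of $\sigma g$ can be moved to $e_1$ by the transposition $[x:z:y]\in S_3\cap\JJ$, and the general-position point $s$ can be scaled to $[1:1:1]$ by an element of $D_2$, there exist monomial maps $a,b\in D_2\rtimes(S_3\cap\JJ)$, both fixing $e_0$, with $g=bha$. For such $a,b$ the maps $\sigma a\sigma$ and $\sigma b\sigma$ are linear, so by Lemma~\ref{lem:deg1} they equal linear maps generated by (\ref{rel1})--(\ref{rel4}).

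The relations then finish the argument. Using $\sigma^2=\id$ from (\ref{rel2}),
\[\sigma g\sigma=\sigma bha\sigma=(\sigma b\sigma)(\sigma h\sigma)(\sigma a\sigma),\]
and substituting $\sigma h\sigma=h\sigma h$ from (\ref{rel5}) gives
\[\sigma g\sigma=\big((\sigma b\sigma)\,h\big)\,\sigma\,\big(h\,(\sigma a\sigma)\big)=g'\sigma g''\]
with $g'=(\sigma b\sigma)h$ and $g''=h(\sigma a\sigma)$ in $\PGL_3(\kk)\cap\JJ$, the whole chain being generated by (\ref{rel1})--(\ref{rel5}). I expect the main obstacle to be the bookkeeping of the second and third paragraphs: turning the numerical hypothesis $\deg(\sigma g\sigma)=2$ and the collinearity condition into the precise statement that the base-point configuration of $\sigma g$ is monomial-equivalent to that of $\sigma h$, and checking that the two cases (according to which of $e_1,e_2$ is the common base-point) are genuinely interchanged by the transposition $[x:z:y]$ via (\ref{rel3}). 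Once $g=bha$ is in place, the computation above is immediate.
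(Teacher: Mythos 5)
Your proposal is correct and follows essentially the same route as the paper's proof: both normalize $g$ by coordinate permutations and diagonal maps into the decomposition $g=bha$ with $a,b$ monomial fixing $[1:0:0]$ (the paper writes $\tau_1g\tau_2=d_1hd_2$, the no-three-collinear hypothesis giving exactly the condition $a_2b_2\neq0$ in the triangular form $[a_1x+a_2z:b_1y+b_2z:cz]$), push the monomial factors through $\sigma$ via relations (\ref{rel3})--(\ref{rel4}), and conclude with $\sigma h\sigma=h\sigma h$ from relation (\ref{rel5}). The only differences are cosmetic: you phrase the normalization in terms of base-point configurations and route the monomial conjugations through Lemma~\ref{lem:deg1}, where the paper invokes the relations directly.
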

\begin{proof}
The assumption $\deg(\sigma g\sigma)=2$ implies by Lemma~\ref{lem:composition} that $\sigma$ and $\sigma g$ have exactly two common base-points, among them $[1:0:0]$ because $\sigma g$ and $\sigma$ are de Jonqui\`eres maps. Up to coordinate permutations, the second point is $[0:1:0]$. More precisely, there exist two coordinate permutations $\tau_1, \tau_2\in S_3\cap\JJ$ such that $\tau_1g\tau_2$ fixes the points $[1:0:0]$ and $[0:1:0]$. Hence there are $a_1, a_2, b_1, b_2, c\in\kk$ such that
\[\tau_1g\tau_2=[a_1x+a_2z:b_1y+b_2z:cz].\]
The assumption that no three of the base-points of $\sigma$ and $\sigma g$ are collinear implies that $a_2b_2\neq0$. So there exist 
two elements $d_1, d_2\in D_2$ such that $\tau_1g\tau_2=d_1hd_2$. Using relations (\ref{rel1})--(\ref{rel5}) we obtain
\begin{align*}
\sigma g\sigma=\sigma \tau_1^{-1}\tau_1g\tau_2\tau_2^{-1}\sigma\stackrel{(\ref{rel1})}= \sigma \tau_1^{-1}d_1hd_2\tau_2^{-1}\sigma&\stackrel{(\ref{rel3}),(\ref{rel4})}= \tau_1^{-1}d_1^{-1}\sigma h\sigma d_2^{-1}\tau_2^{-1}\stackrel{(\ref{rel5})}=\tau_1^{-1}d_1^{-1}h\sigma h d_2^{-1}\tau_2^{-1}.
\end{align*}
The claim follows with $g':=\tau_1^{-1}d_1^{-1}h$ and $g'':=h d_2^{-1}\tau_2^{-1}$. 
\end{proof}

\begin{lem}\label{lem:square}
Let $g_1,\dots,g_4\in\mathrm{PGL}_3(\kk)\cap\JJ$ such that $\sigma g_2\sigma g_1=g_4\sigma g_3\sigma$ is of degree $3$ and 
\begin{itemize}
\item no three of the base-points of $\sigma g_2$ and $\sigma$ are collinear
\item no three of the base-points of $\sigma g_1$ and $\sigma $ are collinear.
\end{itemize}
Then the relation $\sigma g_2\sigma g_1=g_4\sigma g_3\sigma$ is generated by relations $(\ref{rel1})$--$(\ref{rel5})$. 
\end{lem}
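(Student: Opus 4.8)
The goal is to show that the degree-$3$ relation $\sigma g_2\sigma g_1=g_4\sigma g_3\sigma$ among de Jonqui\`eres maps is a consequence of relations (\ref{rel1})--(\ref{rel5}). The natural strategy, following the philosophy of Lemmas~\ref{lem:deg1} and~\ref{lem:deg2}, is to factor the given word through shorter (lower-degree) pieces by inserting a well-chosen quadratic transformation in the middle, and then to recognise each resulting sub-word as an instance of the relations already established. The plan is to exploit that both sides represent the same cubic de Jonqui\`eres map $\varphi:=\sigma g_2\sigma g_1$, and to use the base-point bookkeeping from Lemma~\ref{lem:composition} and Remark~\ref{rmk:composition} to control how degrees and multiplicities add up.

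\textbf{Main steps.} First I would normalise the configuration: using coordinate permutations $\tau\in S_3\cap\JJ$ and diagonal maps $d\in D_2$ (all absorbed into the $g_i$ via relations (\ref{rel1}),(\ref{rel3}),(\ref{rel4})), I would place the base-points of $\sigma g_2$, $\sigma g_1$ and their inverses into standard position, as was done in the proof of Lemma~\ref{lem:deg2}. The collinearity hypotheses are exactly what is needed so that the intermediate quadratic maps are again (up to $\PGL_3$) the standard $\sigma$; by Remark~\ref{rmk:composition} the non-collinearity ensures the degree drops are the expected ones, so that the cubic $\varphi$ genuinely factors as a product of two quadratic de Jonqui\`eres maps with the intended base-point behaviour. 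Second, I would insert a quadratic transformation $\sigma_0$ (conjugate to $\sigma$ by a linear map) at the common point of the two factorisations, rewriting both $\sigma g_2\sigma g_1$ and $g_4\sigma g_3\sigma$ as longer words that pass through $\sigma_0$. Third, I would apply Lemma~\ref{lem:deg2} to the two resulting quadratic-times-quadratic blocks on each side: each block is precisely a situation where a composition $\sigma g\sigma$ has degree $2$ with no three base-points collinear, so Lemma~\ref{lem:deg2} certifies that it equals $g'\sigma g''$ using only relations (\ref{rel1})--(\ref{rel5}). Finally, the two rewritten words are compared; what remains after cancelling the central $\sigma_0$ (or the linear maps conjugating it to $\sigma$) is a word entirely in $\PGL_3(\kk)\cap\JJ$, which is trivial by relation (\ref{rel1}), completing the chain of equalities.

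\textbf{Main obstacle.} The hard part will be the bookkeeping that guarantees the inserted quadratic map is genuinely available: I must verify that the base-points of the relevant compositions are in the non-collinear, ``general'' position required to apply Lemma~\ref{lem:deg2} to \emph{each} sub-block, and that the linear maps conjugating the various copies of $\sigma$ into standard position are consistent on both sides of the equation. Concretely, the two factorisations of the single cubic map $\varphi$ — one read off from $\sigma g_2\sigma g_1$ and one from $g_4\sigma g_3\sigma$ — pass through possibly different intermediate surfaces, and the central step is to show these intermediate configurations can be related by a linear map so that the two words can be glued. Tracking the eight base-points of a cubic and checking the collinearity constraints feed correctly into the hypotheses of Lemma~\ref{lem:deg2} is where the real work lies; once that geometric input is in place, the algebraic manipulation with relations (\ref{rel1})--(\ref{rel5}) is routine, exactly as in the proofs of the preceding two lemmas.
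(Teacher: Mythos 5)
Your plan is essentially the paper's proof: the paper likewise inserts one quadratic map $\tau_1$ sharing two base-points (one of them the common base-point $p_0$) with $\sigma g_1$ and with $g_3\sigma$ --- its existence guaranteed precisely by the two non-collinearity hypotheses you identify --- thereby triangulating the commutative square through a central vertex and applying Lemma~\ref{lem:deg2} to each of the four resulting triangles, which are exactly your two quadratic-times-quadratic blocks per side. The only slip is at the very end: after cancelling the central $\sigma_0$, the leftover comparison is between two equal quadratic maps written as $a\sigma b=c\sigma d$, i.e.\ a word containing one block $\sigma(c^{-1}a)\sigma$ equal to a linear map, so it is handled by Lemma~\ref{lem:deg1} (relations (\ref{rel1})--(\ref{rel4})) rather than by relation (\ref{rel1}) alone --- the paper sidesteps this by choosing the fourth quadratic map $\tau_4$ once and using it in both triangles at the target corner.
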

\begin{proof} Let $p_0:=[0:0:1]$. The maps $g_1,\dots,g_4\in\PGL_3(\kk)\cap\JJ$ fix it. By Lemma~\ref{lem:composition}, the assumption $\deg(\sigma g_2\sigma g_1)=3$ is equivalent to the maps $\sigma g_2$ and $g_1^{-1}\sigma$ having exactly one common base-point, namely $p_0$. For the same reason, the maps $g_4\sigma$ and $\sigma g_3^{-1}$ have only $p_0$ as a common base-point. The assumptions on the base-points imply that the maps $\sigma g_2\sigma g_1$ and $g_3\sigma g_1^{-1}\sigma$ have only proper base-points. Therefore, the maps $g_4\sigma g_3\sigma$ and $\sigma g_4^{-1}\sigma g_2$ have only proper base-points and we obtain that for 
\[(f,g)\in\{(g_1^{-1}\sigma,\sigma g_2), (\sigma g_1,g_3\sigma),(\sigma g_3^{-1},g_4\sigma),(g_2^{-1}\sigma,\sigma g_4^{-1})\}\]
no three base-points of the maps $f$ and $g$ are collinear. {Hence there exists a quadratic map} $\tau_1$ that has exactly two base-points in common with $\sigma g_1$ and with $g_3\sigma$. We write $\tau_1=h_1\sigma h_2$ for some $h_1, h_2\in\PGL_3(\kk)\cap\JJ$. Then $\sigma g_1 \tau_1^{-1}$ and $g_3\sigma\tau_1^{-1}$ are quadratic again. Let $h_3,\dots,h_8\in\PGL_3(\kk)\cap\JJ$ and define the quadratic maps $\tau_i:=h_{2i}\sigma h_{2i-1}\in\JJ$, $i=2,\dots,4$. We can chose  $h_3, h_4$ such that $\tau_2^{-1}\tau_1=h_3^{-1}\sigma h_4^{-1}h_2\sigma h_3=\sigma g_1$. Analogously we chose $h_4,\dots, h_8$ such that:

$\bullet$ the diagram below commutes

$\bullet$ each of the $\tau_i$ has exactly two common base-points (one of them $p_0$) with each of the two other maps departing from the same corner.
\[\xymatrix{\ar@{-->}[rr]^{\sigma g_1} \ar@{-->}[dd]_{g_3\sigma} \ar@{-->}[rd]^{\tau_1}&&\ar@{-->}[dd]^{\sigma g_2}\ar@{-->}[ld]^{\tau_2}\\
&&\\
\ar@{-->}[rr]^{g_4\sigma}\ar@{-->}[ur]^{\tau_3}&&\ar@{-->}[ul]_{\tau_4}}\]
Lemma~\ref{lem:deg2} applied to the triangles yields that each of them is generated by relations (\ref{rel1})--(\ref{rel5}). In particular, the above commutative diagram is generated by relations (\ref{rel1})--(\ref{rel5}). 
\end{proof}

\begin{rmk}\label{rmk:the linear map}
Let $a_1,a_2,b_1,b_2,c\in\kk$. The map
\[g=[a_1x+a_2z:b_1y+b_2z:cz]\in\mathrm{PGL}_3(\kk)\cap\JJ\] is a de Jonqui\'eres map. Furthermore, observe that if $a_2=b_2=0$, then $\sigma g\sigma$ is linear. Otherwise, $\sigma g\sigma$ is a quadratic map. In fact, if $a_2b_2\neq0$, all its base-points are in $\PP^2$; this corresponds to the case that is treated in Lemma \ref{lem:deg2}. If $a_2=0$, the map $\sigma g\sigma$ has a base-point infinitely near to $[1:0:0]$, and if $b_2=0$ it has a base-point infinitely near to $[0:1:0]$. Figure~\ref{fig:bp} displays the constellation of the base-points of $\sigma g$ and $\sigma$. \par
\begin{minipage}[t]{\textwidth}
\center
\def\svgwidth{.5\textwidth}
%% Creator: Inkscape inkscape 0.91, www.inkscape.org
%% PDF/EPS/PS + LaTeX output extension by Johan Engelen, 2010
%% Accompanies image file '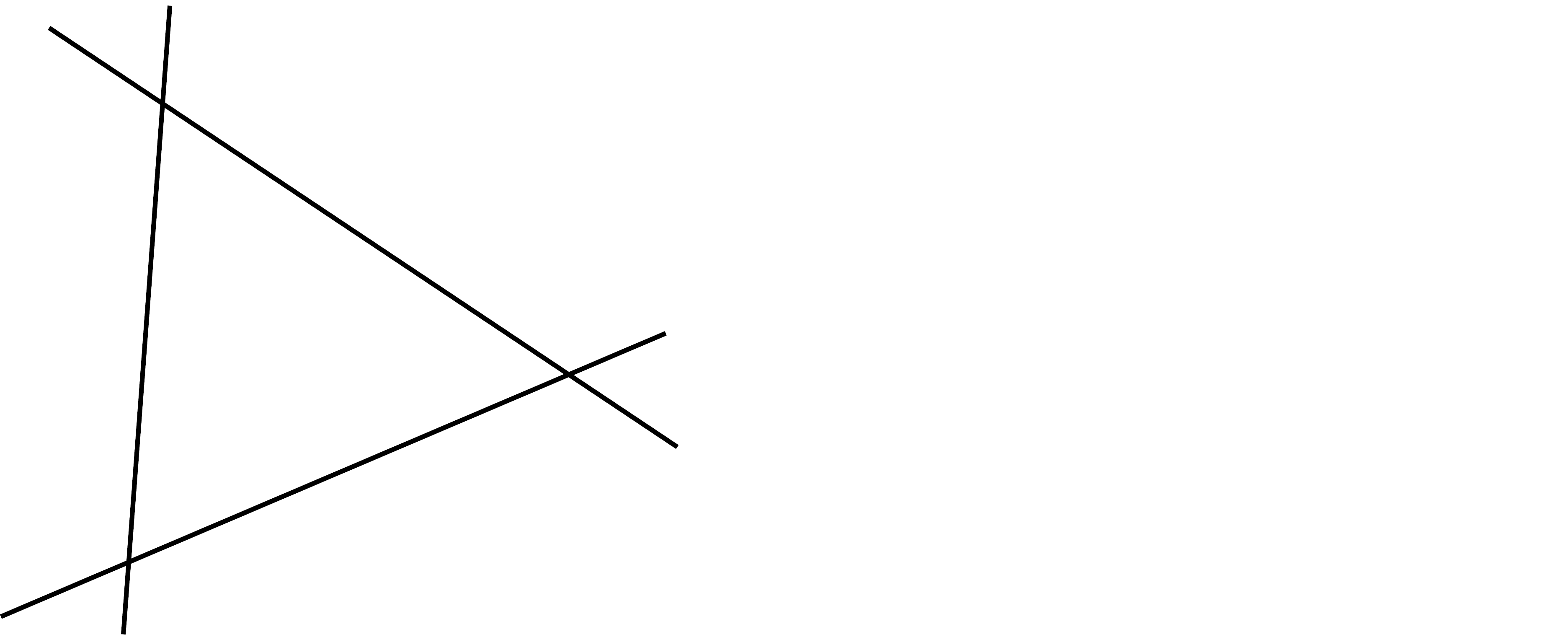' (pdf, eps, ps)
%%
%% To include the image in your LaTeX document, write
%%   \input{<filename>.pdf_tex}
%%  instead of
%%   \includegraphics{<filename>.pdf}
%% To scale the image, write
%%   \def\svgwidth{<desired width>}
%%   \input{<filename>.pdf_tex}
%%  instead of
%%   \includegraphics[width=<desired width>]{<filename>.pdf}
%%
%% Images with a different path to the parent latex file can
%% be accessed with the `import' package (which may need to be
%% installed) using
%%   \usepackage{import}
%% in the preamble, and then including the image with
%%   \import{<path to file>}{<filename>.pdf_tex}
%% Alternatively, one can specify
%%   \graphicspath{{<path to file>/}}
%% 
%% For more information, please see info/svg-inkscape on CTAN:
%%   http://tug.ctan.org/tex-archive/info/svg-inkscape
%%
\begingroup%
  \makeatletter%
  \providecommand\color[2][]{%
    \errmessage{(Inkscape) Color is used for the text in Inkscape, but the package 'color.sty' is not loaded}%
    \renewcommand\color[2][]{}%
  }%
  \providecommand\transparent[1]{%
    \errmessage{(Inkscape) Transparency is used (non-zero) for the text in Inkscape, but the package 'transparent.sty' is not loaded}%
    \renewcommand\transparent[1]{}%
  }%
  \providecommand\rotatebox[2]{#2}%
  \ifx\svgwidth\undefined%
    \setlength{\unitlength}{1078.09529943bp}%
    \ifx\svgscale\undefined%
      \relax%
    \else%
      \setlength{\unitlength}{\unitlength * \real{\svgscale}}%
    \fi%
  \else%
    \setlength{\unitlength}{\svgwidth}%
  \fi%
  \global\let\svgwidth\undefined%
  \global\let\svgscale\undefined%
  \makeatother%
  \begin{picture}(1,0.40583394)%
    \put(-0.15064638,0.19547697){\color[rgb]{0,0,0}\makebox(0,0)[lb]{\smash{\SBa{$g^{-1}([0:0:1])$}}}}%
    \put(0,0){\includegraphics[width=\unitlength,page=1]{fig07.pdf}}%
    \put(0.11048432,0.01349818){\color[rgb]{0,0,0}\makebox(0,0)[lb]{\smash{\SBa{$[1:0:0]$}}}}%
    \put(0.34900019,0.20961123){\color[rgb]{0,0,0}\makebox(0,0)[lb]{\smash{\SBa{$[0:1:0]$}}}}%
    \put(0.13309916,0.34883386){\color[rgb]{0,0,0}\makebox(0,0)[lb]{\smash{\SBa{$[0:0:1]$}}}}%
    \put(0,0){\includegraphics[width=\unitlength,page=2]{fig07.pdf}}%
    \put(0.67762209,0.01703174){\color[rgb]{0,0,0}\makebox(0,0)[lb]{\smash{\SBa{$[1:0:0]$}}}}%
    \put(0.91613796,0.2131448){\color[rgb]{0,0,0}\makebox(0,0)[lb]{\smash{\SBa{$[0:1:0]$}}}}%
    \put(0.70023693,0.35236742){\color[rgb]{0,0,0}\makebox(0,0)[lb]{\smash{\SBa{$[0:0:1]$}}}}%
    \put(0,0){\includegraphics[width=\unitlength,page=3]{fig07.pdf}}%
    \put(0.83168577,0.28028262){\color[rgb]{0,0,0}\makebox(0,0)[lb]{\smash{\SBa{$g^{-1}([0:0:1])=[0:-\frac{b_2}{b_1c}:\frac{1}{c}]$}}}}%
    \put(0,0){\includegraphics[width=\unitlength,page=4]{fig07.pdf}}%
    \put(-0.17290787,0.13505299){\color[rgb]{0,0,0}\makebox(0,0)[lb]{\smash{\SBa{$=[-\frac{a_2}{a_1c}:0:\frac{1}{c}]$}}}}%
  \end{picture}%
\endgroup%



\captionof{figure}{The base-points of $\sigma g$ if $a_2\neq0$, $b_2=0$ (left) and if $a_2=0$, $b_2\neq0$ (right). } \label{fig:bp}
\end{minipage}
\end{rmk}

The following two lemmas provide a solution of how to treat these two cases. 

\begin{defi}
For a de Jonqui\`eres transformation $f\in\JJ$ and a line $l\subset\PP^2$ we define the {\it discrepancy} $N(f,l)$ as follows:
If $f(l)$ is a line, we set $N(f,l):=0$. If $f(l)=\{p\}$ is a point, then there exists a sequence of $K$ blow-ups $\pi\colon S\rightarrow \PP^2$ and an induced birational map $\hat{f}\colon \PP^2\dashrightarrow S$, such that the following diagram commutes:
\[\xymatrix{&S\ar[d]^{\pi}\\ \PP^2\ar@{-->}[r]^f \ar@{-->}[ru]^{\hat{f}}&\PP^2}\]
and such that the strict transform $\hat{f}(l)\subset S$ is a curve. We define $N(f,l)$ to be the least number $K$ of blow-ups necessary for this construction.
\end{defi}

\begin{rmk}
Let $f\in\JJ$ and let $l\subset\PP^2$ be a line. If $N(f,l)\geq1$, then the point $P:=f(l)$ is a base-point of $f^{-1}$. More precisely, if $\pi$ and $\hat{f}$ are as above, then $f^{-1}$ has a base-point in the $(N(f,l)-1)$-th neighbourhood of $f(l)$ and the exceptional divisor of this base-point is $\hat{f}(l)$. 
\end{rmk}

\begin{lem}\label{lem:discrepancy}
Let $f\in\JJ$ be a de Jonqui\`eres transformation and $l\subset\PP^2$  a line such that $N(f,l)\geq 1$, and let $g\in\PGL_3(\kk)\cap \JJ$. Denote by $\eta\colon S\rightarrow\PP^2$ the blow-up of the base-points of $\sigma g$. Then 
\begin{enumerate}
\item $N(\sigma gf,l)=N(f,l)+1$ if and only if the point $f(l)\in\PP^2$ is on a line contracted by $\sigma g$ but is not a base-point of $\sigma g$.
\item $N(\sigma gf,l)=N(f,l)-1$ if and only if the point $f(l)\in\PP^2$ is a base-point of $\sigma g$ and $(\eta^{-1}f)(l)$ is a line or a point not on the strict transform of a line contracted by $\sigma g$.
\item $N(\sigma gf,l)=N$ otherwise.
%if and only if one of the following holds:
%	\begin{enumerate}
%	\item $f(l)\in\PP^2$ is not a base-point of $\sigma g$ and not on any of its contracted lines.
%	\item $f(l)\in\PP^2$ is a base-point of $\sigma g$ and $(\eta^{-1}f)(l)$ is the intersection of an exceptional divisor of $\eta$ and the strict transform of a line contracted by $\sigma g$.
%\end{enumerate}
\end{enumerate}
\end{lem}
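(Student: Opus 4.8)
The plan is to reduce the computation of $N(\sigma g f,l)$ to a local analysis of how the quadratic transformation $\sigma g$ acts on the tower of infinitely near points over $p:=f(l)$. Write $N:=N(f,l)$. First I would unwind the definition through the Remark: since $N\geq 1$, there is a chain of infinitely near points $p=p_0,p_1,\dots,p_{N-1}$, each a base-point of $f^{-1}$, whose successive blow-ups form the minimal resolution $\pi\colon S\to\PP^2$ making $\hat{f}(l)$ the exceptional divisor of the last blow-up at $p_{N-1}$. Since $N(\sigma g f,l)$ is computed by the analogous minimal tower over $\sigma g(p)$, the whole statement becomes a comparison of two towers of blow-ups, governed entirely by the behaviour of $\sigma g$ near $p$.

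Next I would record the three standard local behaviours of the quadratic map $\sigma g$, whose base-points are the $g^{-1}$-images of the coordinate points and whose contracted lines are the $g^{-1}$-images of the coordinate lines: away from its base-points and contracted lines it is a local isomorphism; on a contracted line but away from the base-points it contracts a curve to a point; and at a base-point it opens a point up into a curve. These behaviours are visibly exhaustive and mutually exclusive at $p$, and combined with the position of the lifted point $(\eta^{-1}f)(l)$ they partition the configurations exactly along the three cases; this is what will deliver both directions of each equivalence and the catch-all clause (3).

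For case (3) in its generic form --- $p$ neither a base-point nor on a contracted line --- $\sigma g$ is biregular near $p$ and carries the entire tower $p_0,\dots,p_{N-1}$ isomorphically onto a tower over $\sigma g(p)$, so $N(\sigma g f,l)=N$ at once. For case (1) --- $p$ on a contracted line $m$ but not a base-point --- $\sigma g$ contracts $m$ to $\sigma g(p)$, which is therefore a base-point of $(\sigma g)^{-1}$ with exceptional divisor $m$; I would show that resolving this one extra contraction prepends a single blow-up before the chain reappears, giving $N+1$. For case (2) --- $p$ a base-point of $\sigma g$ --- I would lift through $\eta$: since $\eta$ blows up $p$, it absorbs exactly the first level $p_0$ of the tower, so I examine $(\eta^{-1}f)(l)$. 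If this is a curve we are done with $N=1$ dropping to $0$; if it is the point $p_1$ and the general-position condition holds, then $\sigma g$ is a local isomorphism along the residual chain $p_1,\dots,p_{N-1}$, and the minimal tower over $\sigma g f(l)$ has length $N-1$.

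I expect case (2), together with establishing the ``only if'' directions, to be the main obstacle, because of the bookkeeping with infinitely near points when $N\geq 2$. One must verify that $\eta$ absorbs precisely one level and no more, that the residual point $p_1$ sits on the exceptional curve in a position in which the remaining $N-1$ blow-ups resolve $\sigma g f$ minimally, and --- crucially --- that the failure of the general-position condition on $p_1$ (namely $p_1$ lying on the strict transform of a contracted line) feeds one level back in, via the mechanism of case (1), so that the outcome is $N$ rather than $N-1$ and we land correctly in clause (3). Checking that these geometric conditions are genuinely equivalent to the asserted discrepancy changes in both directions is where the care lies; the local-isomorphism parts are then immediate.
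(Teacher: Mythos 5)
Your proposal is correct and takes essentially the same approach as the paper's own (much terser) proof: a case distinction on whether $p=f(l)$ is a base-point of $\sigma g$, lies on a contracted line but is not a base-point, or neither, combined with lifting through $\eta$ to track how the chain of infinitely near base-points of $f^{-1}$ over $p$ shifts by $+1$, $-1$, or $0$. The detail you flag as the main obstacle in case (2) --- the level being fed back in when $(\eta^{-1}f)(l)$ lands on the intersection of the exceptional divisor of $p$ with the strict transform of a contracted line --- is precisely the ``unless'' clause with which the paper's proof concludes.
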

\begin{proof}
If $p:=f(l)$ is not a base-point of $\sigma g$, then its image by $\sigma g$ is a base-point of $(\sigma gf)^{-1}$ with $N(\sigma gf,l)\geq N(f,l)$. The inequality is strict if and only if $p$ is on a line contracted by $\sigma g$. 

If $p$ is a base-point of $\sigma g$ the discrepancy decreases strictly unless $\eta^{-1}f$ contracts $l$ onto an intersection point of the exceptional divisor of $p$ and the strict transform of a line contracted by $\sigma g$.
\end{proof}

\begin{lem}\label{lem:no bp 1} 
Let $g_1,\dots,g_m\in\mathrm{PGL}_3(\kk)\cap\JJ$ such that $g_m\sigma g_{m-1}\sigma\cdots \sigma g_1=\mathrm{id}$. Then there exist linear maps $h_1,\dots,h_m\in\PGL_3(\kk)\cap\JJ$ and a de Jonqui\`eres transformation $\varphi\in\JJ$ such that 
\[\varphi g_m\sigma g_{m-1}\sigma\cdots \sigma g_1\varphi^{-1}=h_m\sigma h_{m-1}\sigma\cdots h_1\sigma, \]
and such that the following properties are satisfied:
\begin{enumerate}
\item the above relation is generated by relations $(\ref{rel1})$--$(\ref{rel5})$,
\item $\deg(\sigma h_i\sigma h_{i-1}\cdots \sigma h_1)=\deg(\sigma g_i\sigma g_{i-1}\cdots \sigma g_1)$ for all $i=1,\dots,m$,
\item $(\sigma h_i\sigma h_{i-1}\cdots \sigma h_1)^{-1}$ does not have any infinitely near base-points above $[1:0:0]$ for all $i=1,\dots,m$.
%\item if $\sigma g_i$ and $(\sigma g_{i-1}\cdots \sigma g_1)^{-1}$ have $k$ common base-points, then $\sigma h_i$ and $(\sigma h_{i-1}\cdots \sigma h_1)^{-1}$ have $k$ common base-points as well. \snote{Do we really need this anywhere?}
\end{enumerate}
\end{lem}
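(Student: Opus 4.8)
The plan is to normalise the word from right to left, one factor $\sigma g_i$ at a time, controlling at each stage the infinitely near base-points over $[1:0:0]$ by means of the discrepancy introduced before Lemma~\ref{lem:discrepancy}. Write $f_i:=\sigma g_i\sigma g_{i-1}\cdots\sigma g_1\in\JJ$ for the partial products appearing in (2) and (3); for $i=1$ the map $(\sigma h_1)^{-1}=h_1^{-1}\sigma$ has the three proper base-points of $\sigma$, so property (3) holds automatically and the induction can start. I would assume inductively that linear maps $h_1,\dots,h_{i-1}$ and a partial de Jonqui\`eres conjugator have already been produced so that the first $i-1$ partial products are normalised, carry the correct degrees, and have inverses with no infinitely near base-point over $[1:0:0]$.

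For the inductive step I would examine what happens to the base-points over $[1:0:0]$ when passing from $f_{i-1}$ to $f_i=\sigma g_i f_{i-1}$. By Remark~\ref{rmk:the linear map}, writing $g_i$ in the normal form $[a_1x+a_2z:b_1y+b_2z:cz]$, the composite $\sigma g_i\sigma$ acquires a base-point infinitely near $[1:0:0]$ exactly in the degenerate case $a_2=0$, $b_2\neq0$; Lemma~\ref{lem:discrepancy}(1) then identifies this with a line $l$ for which $f_{i-1}(l)$ lies on a line contracted by $\sigma g_i$ onto $[1:0:0]$, so that the discrepancy jumps, $N(f_i,l)=N(f_{i-1},l)+1\geq 2$, and $f_i^{-1}$ acquires an infinitely near base-point over $[1:0:0]$. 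Thus the obstruction to (3) at stage $i$ is detected by the discrepancy, and the natural measure of progress is the sum, over all stages $i$ and all lines $l$ contracted by $f_i$ onto $[1:0:0]$, of the excess discrepancies $N(f_i,l)-1$.

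To remove such a base-point I would conjugate the whole relation by a de Jonqui\`eres map acting on the pencil of lines through $[1:0:0]$ so as to pull the offending infinitely near point back to a proper point of $\PP^2$; this turns the degenerate case of Remark~\ref{rmk:the linear map} into the generic case in which all base-points of $\sigma h_i$ are proper. The resulting piece $\sigma h_i\sigma$ (respectively $\sigma h_i\sigma h_{i-1}$) is then rewritten by Lemma~\ref{lem:deg1} or Lemma~\ref{lem:deg2}, that is, purely through relations (\ref{rel1})--(\ref{rel5}), into a word with proper base-points only, and Lemma~\ref{lem:square} handles the compatibility of the two triangles that appear. The degree equalities required in (2) are preserved throughout because composition with a quadratic map changes degrees by the explicit formula of Lemma~\ref{lem:composition} and Remark~\ref{rmk:composition}, which depends only on the multiplicities at the shared base-points and not on whether those points are proper or infinitely near; one checks that the normalising moves leave these multiplicities unchanged. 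Accumulating the successive conjugators into a single de Jonqui\`eres map $\varphi$ and collecting the relation-rewrites then yields the statement.

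The main obstacle I expect is twofold. First, one must verify that the conjugation chosen to repair stage $i$ does not destroy the normalisation already achieved at the earlier stages, i.e. that it introduces no new infinitely near base-point over $[1:0:0]$ further to the right; this is precisely what forces the discrepancy measure above to be well chosen and to decrease strictly, guaranteeing termination of the induction. Second, one must confirm at every step that the collinearity hypotheses of Lemmas~\ref{lem:deg2} and~\ref{lem:square} are satisfied by the intermediate maps, so that the rewrites genuinely are consequences of (\ref{rel1})--(\ref{rel5}); keeping these hypotheses valid while simultaneously fixing the degrees and the base-points over $[1:0:0]$ is the delicate part of the bookkeeping.
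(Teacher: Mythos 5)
Your overall strategy---pull the infinitely near base-points above $[1:0:0]$ down to proper points by conjugation and induct on a complexity measure---is the right one, but the proposal leaves exactly the hard step unconstructed, and the local scheme you choose runs into the obstacle you yourself flag without resolving it. You never specify the repairing conjugator beyond saying it should ``pull the offending infinitely near point back to a proper point,'' and because you normalise one factor $\sigma g_i$ at a time, any such local conjugation alters the neighbouring partial products $f_j$, so it can create new infinitely near points above $[1:0:0]$ at stages already treated; you offer no mechanism forcing your measure (a sum of excess discrepancies) to decrease, and as stated it could increase. Two further points are off: the discrepancy $N(f,l)$ detects base-points of $f^{-1}$ in the tower over $f(l)$, so it sees points above $[1:0:0]$ only through lines contracted onto $[1:0:0]$, and your identification of the obstruction with the $a_2=0$ case of Remark~\ref{rmk:the linear map} is valid only for the quadratic composite $\sigma g_i\sigma$---for the high-degree partial products $f_i$ infinitely near points arise in ways that remark does not cover. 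Moreover, Lemmas~\ref{lem:deg2} and~\ref{lem:square} apply only to quadratic pieces, so your rewriting step requires all inserted maps to be quadratic with proper base-points in general position, which your proposal does not arrange.

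The paper's proof avoids the interaction problem by a single global move, and this is the idea missing from your proposal: fix two \emph{general} points $s_0,t_0$, set $s_i,t_i$ equal to their images under $f_i$, and insert at every junction simultaneously the quadratic involution $\tau_i=\alpha_i\sigma\alpha_i^{-1}$ with base-points $[1:0:0],s_i,t_i$; the hypothesis that the word equals $\mathrm{id}$ forces $s_{m-1}=s_0$, $t_{m-1}=t_0$, hence $\tau_{m-1}=\tau_0$, so the whole word is conjugated by the single de Jonqui\`eres map $\varphi=\tau_0$. Each square $\theta_i=\tau_i\sigma g_i\tau_{i-1}^{-1}$ is then quadratic with proper base-points, generality of $s_i,t_i$ yields both the collinearity hypotheses of Lemma~\ref{lem:square} and the degree count $\deg(\tau_i f_i)=\deg(f_i)+1$ (giving property (2)), and since each $\tau_i$ has a simple base-point at $[1:0:0]$ it sends points in the first neighbourhood of $[1:0:0]$ to proper points, so the maximal $k$ such that some $f_i^{-1}$ has a base-point in the $k$-th neighbourhood of $[1:0:0]$ strictly drops; induction on $k$ closes the argument with no discrepancy bookkeeping at all---the machinery of Lemma~\ref{lem:discrepancy} is reserved for Lemma~\ref{lem:no bp 2}, where infinitely near points over points other than $[1:0:0]$ are removed. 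Unless you supply this simultaneous conjugation (or an equivalent device guaranteeing that repairs do not propagate to other stages), your induction does not close.
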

\begin{proof}
Define $f_i:=\sigma g_i\cdots \sigma g_1$ for $i=1,\dots,m-1$. We will construct the $h_i$ such that  (3) is satisfied and then show that the other properties hold as well. If none of the $f_i$ has a base-point that is infinitely near to $[1:0:0]$, we set $h_i:=g_i$ for all $i$ and all the claims are trivially satisfied. Otherwise, there exist $k\geq 1$ such that $f_i$ has a base-point in the $k$-th neigbourhood of $[1:0:0]$ for some $i$. Assume $k$ to be maximal with this property. We will lower  $k$ and then proceed by induction. 
%For part (3) we do induction on the tuple $(\sigma g_i\cdots \sigma g_1)^{-1}$ that are in the highest neighbourhood of $[1:0:0]$. The other parts will follow from the construction of the $h_i$. If none of the $(\sigma g_i\cdots\sigma g_1)^{-1}$ has a base-point infinitely near $[1:0:0]$, we put $h_i=g_i$ for $i=1,\dots,m$. Else, we do the following:
For this we pick two general points $s_0,t_0\in\PP^2$ and define
\begin{align*}&s_i:=f_i(s_0)=\sigma g_i\sigma\cdots\sigma g_1(s_0),\quad t_i:=f_i(t_0)=\sigma g_i\sigma\cdots\sigma g_1(t_0),\qquad \text{ for }i=1,\dots,m-2,\\
& s_{m-1}:=g_m\sigma g_{m-1}(s_{m-2}),\quad t_{m-1}:=g_m\sigma g_{m-1}(t_{m-2}).
%\snote{hier gibt es glaub ein problem mit den indizes}
\end{align*}
Since $s_0$ and $t_0$ are general points of $\PP^2$, the $s_i$ and $t_i$ are general as  well. For all $i=1,\dots,m-1$, there exist $\alpha_i\in\mathrm{PGL}_3(\kk)\cap\JJ$ that send $p_0,p_1,p_2$ onto $p_0,s_i,t_i$ respectively. The map 
\[\tau_i:=\alpha_i\sigma\alpha_i^{-1}\in\JJ\] 
is a quadratic involution with base-points $p_0,s_i,t_i$. The assumption $g_m\sigma\cdots\sigma g_1=\mathrm{Id}$ implies that $s_{m-1}=s_0$ and $t_{m-1}=t_0$, so we may choose $\alpha_{m-1}=\alpha_0$ and obtain $\tau_{m-1}=\tau_0$. We define
\[\theta_i:=\tau_{i}\sigma g_i\tau_{i-1}^{-1}\in\JJ,\quad i=1,\dots,m-2,\qquad \theta_{m-1}:=\tau_{m-1}(g_m\sigma g_{m-1})\tau_{m-2}^{-1}\in\JJ\]
and consider the sequence
\[\Id=\tau_0g_m\sigma g_{m-1}\sigma\cdots\sigma g_1\tau_0^{-1}=\tau_0g_m\tau_{m-1}^{-1}\tau_{m-1}\sigma g_{m-1}\tau_{m-2}^{-1}\tau_{m-2}\sigma g_{m-2}\dots \sigma g_1\tau_0^{-1}=\theta_{m-1}\theta_{m-2}\cdots\theta_1\]
The situation is visualised in the following commutative diagram:

\[\xymatrix{\ar@{-->}[r]^{\sigma g_1}\ar@{-->}[d]_{\tau_0}&\ar@{-->}[r]^{\sigma g_2}\ar@{-->}[d]_{\tau_1}&\ar@{-->}[d]^{\tau_2}\ar@{..>}[rr]&&\ar@{-->}[r]^{\sigma g_{n-1}}\ar@{-->}[d]_{\tau_{n-2}}&\ar@{-->}[r]^{\sigma g_n}\ar@{-->}[d]_{\tau_{n-1}}&\ar@{-->}[d]_{\tau_{n}}\ar@{..>}[rr]&&\ar@{-->}[r]^{g_m\sigma g_{m-1}}\ar@{-->}[d]^{\tau_{m-2}}&\ar@{-->}[d]^{\tau_{m-1}=\tau_0}\\
\ar@{-->}[r]_{\theta_1}&\ar@{-->}[r]_{\theta_2}&\ar@{..>}[rr]&&\ar@{-->}[r]_{\theta_{n-1}}&\ar@{-->}[r]_{\theta_n}&\ar@{..>}[rr]&&\ar@{-->}[r]_{\theta_{m-1}}&}
\]
We claim that for each $i=1,\dots,m-1$, the map $\theta_i\in\JJ$ is a quadratic map with only proper base-points in $\PP^2$. The only common base-point of the transformations $\sigma g_i$ and $\tau_{i}$ is $p_0$, hence by Lemma~\ref{lem:composition} the map $\tau_{i-1}g_i^{-1}\sigma$ is of degree $3$ and has $p_0,p_1,p_2,s_{i+1},t_{i+1}$ as base-points. The common base-points of the two transformations $\tau_{i-1}g_i^{-1}\sigma$ and $\tau_i$ are the points $p_0,s_{i+1},t_{i+1}$. Therefore, the composition $\theta_i=\tau_i\sigma g_i\tau_{i-1}^{-1}$ is quadratic and its base-points are $p_0$ and the image under $\tau_{i-1}$ of the two base-points of $\sigma g_{i}$ different from $p_0$. 
\[\xymatrix{\ar@{-->}[rr]^{\sigma g_i}^(.2){[p_0,p_1,p_2]}\ar@{-->}[d]_{\tau_{i-1}}_(.25){[p_0,s_{i-1},t_{i-1}]}&\qquad\qquad&\ar@{-->}[d]^{\tau_i}^(.25){[p_0,s_i,t_i]}\ar@{-->}[lld]|(.35){[p_0,p_1,p_2,s_i,t_i]}\\
\ar@{-->}[rr]^{\theta_i}&&}\]
Since $s_{i-1}$ and $t_{i-1}$ are general points, these images are proper points of $\PP^2$. So for each $i=1,\dots,m-1$, we find $\beta_i,\gamma_i\in\PGL_3(\kk)\cap\JJ$ such that $\theta_i=\beta_i\sigma\gamma_i$. 
\[\xymatrix{\ar@{-->}[r]^{\sigma g_1}\ar@{-->}[d]_{\tau_0}&\ar@{-->}[r]^{\sigma g_2}\ar@{-->}[d]_{\tau_1}&\ar@{-->}[d]^{\tau_2}\ar@{..>}[rr]&&\ar@{-->}[r]^{\sigma g_{n-1}}\ar@{-->}[d]_{\tau_{n-2}}&\ar@{-->}[r]^{\sigma g_n}\ar@{-->}[d]_{\tau_{n-1}}&\ar@{-->}[d]_{\tau_n}\ar@{..>}[rr]&&\ar@{-->}[r]^{g_m\sigma g_{m-1}}\ar[d]^{\tau_{m-1}}&\ar@{-->}[d]^{\tau_m=\tau_0}\\
\ar@{-->}[r]_{\beta_1\sigma\gamma_1}&\ar@{-->}[r]_{\beta_2\sigma\gamma_2}&\ar@{..>}[rr]&&\ar@{-->}[r]_(.2){\beta_{n-1}\sigma\gamma_{n-1}}&\ar@{-->}[r]_(.6){\beta_n\sigma\gamma_n}&\ar@{..>}[rr]&&\ar@{-->}[r]_{\beta_{m-1}\sigma\gamma_{m-1}}&}
\]
We write $\tilde{h}_1:=\gamma_1$, $\tilde{h}_{m}:=\beta_{m-1}$, $\tilde{h}_i:=\gamma_{i}\beta_{i-1}$, $i=2,\dots,m-1$.  
%Furthermore, each square in the above commutative diagram is generated by relations (\ref{rel1})--(\ref{rel5}) by Lemma~\ref{lem:square}. This yields (1). 
By Remark~\ref{rmk:composition}, we have
\[\deg(\tau_i\sigma g_i\cdots\sigma g_1)=\deg(\sigma g_i\cdots\sigma g_1)+1\]
and therefore
\[\deg(\theta_i\cdots\theta_1)=\deg(\tau_i\sigma g_i\cdots\sigma g_1\tau_0^{-1})=\deg(\sigma g_i\cdots \sigma g_1).\]
Hence we finally obtain
\[\deg(\sigma \tilde{h}_i\sigma\cdots\sigma\tilde{h}_1)=\deg(\beta_{i+1}^{-1}\theta_i\cdots\theta_1)=\deg(\sigma g_i\cdots \sigma g_1).\] 
This is part (2) of the lemma. By Lemma~\ref{lem:square}, all squares in the above diagram are generated by relations (\ref{rel1})--(\ref{rel5}), hence the whole diagram is generated by these relations. This is part (1). 

Let us look at part (3). 
The base-points of $(\theta_i\cdots \theta_1)^{-1}$ are $[1:0:0]$ and the proper images of the base-points of $(\sigma g_i\cdots\sigma g_1)^{-1}$ different from $[1:0:0]$. 
By construction of the $\tau_i$, none of the base-points of $(\sigma g_i\cdots\sigma g_1)^{-1}$ different from $[1:0:0]$ are on a line contracted by $\tau_i$. Moreover, the base-points of $(\sigma g_i\cdots\sigma g_1)^{-1}$ in the first neighbourhood of $[1:0:0]$ are sent by $\tau_i$ onto proper points of $\PP^2$. In conclusion, $(\theta_i\cdots\theta_1)^{-1}$ has less base-points infinitely near $[1:0:0]$. Induction step done.
The same argument yields part (4). 
\end{proof}

\begin{lem}\label{lem:no bp 2} 
Let $g_1,\dots,g_m\in\PGL_3(\kk)\cap\JJ$ be linear de Jonqui\`eres transformations. 
 Then there exist linear de Jonqui\`eres transformations $h_1,\dots,h_m\in\PGL_3(\kk)\cap\JJ$, and a de Jonqui\`eres transformation $\varphi\in\JJ$ such that 
\[\varphi g_m\sigma g_{m-1}\sigma\cdots \sigma g_1\varphi^{-1}=h_m\sigma h_{m-1}\sigma\cdots\sigma h_1\]
and such that the following properties hold:
\begin{enumerate}
\item the above relation is generated by relations $(\ref{rel1})$--$(\ref{rel5})$,
\item $\deg(\sigma h_i\cdots\sigma h_1)=\deg(\sigma g_i\cdots\sigma g_1)$ for all $i=1,\dots,m$,
\item $(\sigma h_i\sigma h_{i-1}\cdots \sigma h_1)^{-1}$ does not have any infinitely near base-points for all $i=1,\dots,m$.
\end{enumerate}
\end{lem}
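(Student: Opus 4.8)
The plan is to run the same ladder construction as in the proof of Lemma~\ref{lem:no bp 1}, the one genuinely new feature being that we must now kill \emph{all} infinitely near base-points of the partial inverses, not only those sitting above $[1:0:0]$. I would first translate condition~(3) into the language of discrepancies: by the remark following Lemma~\ref{lem:discrepancy}, the inverse $(\sigma h_i\sigma h_{i-1}\cdots\sigma h_1)^{-1}$ has no infinitely near base-points if and only if $N(\sigma h_i\sigma h_{i-1}\cdots\sigma h_1,l)\le 1$ for every line $l\subset\PP^2$. Thus the task becomes to modify the word so that, at each partial product, no discrepancy exceeds $1$.

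For the infinitely near base-points lying above $[1:0:0]$ the argument follows that of Lemma~\ref{lem:no bp 1}: writing $f_i:=\sigma g_i\sigma\cdots\sigma g_1$, I would thread quadratic involutions $\tau_i=\alpha_i\sigma\alpha_i^{-1}\in\JJ$ through the word, form the commutative ladder with rungs $\theta_i:=\tau_i\sigma g_i\tau_{i-1}^{-1}$, check that each rung is a quadratic de Jonqui\`eres map with only proper base-points, and factor it as $\theta_i=\beta_i\sigma\gamma_i$. Lemma~\ref{lem:square} then shows that every square of the ladder, and hence the whole rewriting, is generated by relations~(\ref{rel1})--(\ref{rel5}), while Remark~\ref{rmk:composition} guarantees that the degree sequence is unchanged, which is condition~(2).

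The new step is to remove the infinitely near base-points above a proper point $q\ne[1:0:0]$. Here I would again thread a quadratic involution through the word, but now choose its two free base-points so that one of them is $q$; Lemma~\ref{lem:discrepancy}(2) then forces the discrepancy at $q$ to drop, since $q$ becomes a base-point of the inserted map and the relevant strict transform is made into a line. The crucial point is to keep the remaining free base-point in general position, so that case~(1) of Lemma~\ref{lem:discrepancy} never applies and no discrepancy is raised elsewhere, in particular none is created afresh above $[1:0:0]$. Because each such involution still has $[1:0:0]$ among its base-points it lies in $\JJ$, the resulting rungs remain quadratic with proper base-points, and Lemma~\ref{lem:square} again certifies that the modification is generated by~(\ref{rel1})--(\ref{rel5}). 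Iterating over the finitely many bad points $q$ and over the indices $i$ drives all discrepancies down to at most $1$, giving~(3).

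The step I expect to be the main obstacle is producing a genuine \emph{conjugation} $\varphi(g_m\sigma\cdots\sigma g_1)\varphi^{-1}$ of the correct length, rather than the two-sided modification $\tau_{\mathrm{last}}\,(g_m\sigma\cdots\sigma g_1)\,\tau_{\mathrm{first}}^{-1}$ that the ladder produces in general. In Lemma~\ref{lem:no bp 1} this was automatic: the hypothesis that the word is the identity forces the terminal and initial involutions to coincide, closing the ladder into a cylinder. Here the word is an arbitrary transformation, so I would instead fix the two boundary involutions to a common $\varphi=\tau_0=\tau_{\mathrm{last}}$ and exploit the freedom in the general-position choices of the interior involutions to make the ladder close with quadratic rungs throughout. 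Balancing this closure condition against the two simultaneous requirements, namely lowering the discrepancy at each $q\ne[1:0:0]$ while never raising it above $[1:0:0]$, is the delicate heart of the argument, and is exactly what Lemma~\ref{lem:discrepancy} is designed to control.
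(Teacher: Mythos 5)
Your skeleton does match the paper's: first remove the infinitely near points above $[1:0:0]$ by the ladder of Lemma~\ref{lem:no bp 1}, then lower the remaining discrepancies by inserting quadratic maps with a base-point at the bad point, controlled by Lemma~\ref{lem:discrepancy}. But the step you leave unspecified --- how the inserted involution is absorbed back into the word as quadratic pieces --- is the heart of the proof, and your version of it would fail. The paper does \emph{not} thread a ladder through the whole word in this second stage; it performs a local surgery. With $N$ the maximal discrepancy, $k$ the \emph{largest} index attaining it, and $p_k=f_k(l_k)$, it inserts a single $\sigma h$ with base-points $[1:0:0],p_k,s$ ($s$ general) between $f_k$ and $\sigma g_{k+1}$. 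Maximality of $k$ gives $N(f_{k+1},l_k)<N$, and Lemma~\ref{lem:discrepancy} then forces $p_k$ to be a base-point of $\sigma g_{k+1}$; only because of this coincidence is $\sigma g_{k+1}(\sigma h)^{-1}$ quadratic (Lemma~\ref{lem:composition}), hence rewritable as $a_2\sigma a_1$ --- a closing \emph{triangle} via Lemma~\ref{lem:deg2}, not a ladder square. On the left one either closes at once with $\sigma h\sigma g_k=b_2\sigma b_1$, or must propagate leftwards with maps $\sigma h'$ based at the successive preimages of $p_k$ and of $s$, producing squares $\sigma h\sigma g_k(\sigma h')^{-1}=c_2\sigma c_1$ (Lemma~\ref{lem:square}) until an index where the discrepancy drops. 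In your global threading the rungs at exactly the special positions are where quadraticity with proper base-points breaks down: one free base-point of your $\tau_i$ is the non-general preimage of $q$, so the generality argument that makes $\theta_i=\tau_i\sigma g_i\tau_{i-1}^{-1}$ quadratic in Lemma~\ref{lem:no bp 1} is unavailable, and you never invoke the base-point coincidences from Lemma~\ref{lem:discrepancy} that must replace it. Two smaller gaps: ``no discrepancy is raised elsewhere'' does not follow from generality of the second free point alone, since the line through $[1:0:0]$ and $q$ is contracted by the inserted map and is not general (one needs the non-collinearity of base-points from Remark~\ref{rmk:composition}); and ``iterating over the finitely many bad points'' is not a termination proof, because an insertion can create new lines of maximal discrepancy at lower indices --- this is why the paper inducts on the lexicographically ordered triple $(N,k,\#)$.

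On your ``main obstacle'': you rightly observe that the statement, unlike Lemma~\ref{lem:no bp 1}, does not assume the word is the identity, but your proposed fix is unsound. In the ladder the base-points of $\tau_i$ are forced to be $[1:0:0]$, $f_i(s_0)$, $f_i(t_0)$; there is no residual freedom in the interior involutions, and imposing $\tau_{\mathrm{last}}=\tau_0$ forces $\{w(s_0),w(t_0)\}=\{s_0,t_0\}$ for $w=g_m\sigma\cdots\sigma g_1$, i.e.\ $s_0,t_0$ would have to be periodic points of $w$ --- incompatible with the generality on which every step (quadraticity of rungs, properness of base-points) rests, unless $w=\id$. The paper never meets this problem: its second-stage insertions are local and self-absorbing (each inserted $\sigma h$ cancels against rewritten neighbours, so no boundary maps arise), and the only conjugation $\varphi$ is the one imported from Lemma~\ref{lem:no bp 1}. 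Note, moreover, that the paper's own proof does use $\sigma g_m\cdots\sigma g_1=\id$ (to guarantee an index where the leftward propagation stops), so the lemma is in effect proved --- and only ever applied, in Proposition~\ref{prop:relations J} --- under the identity hypothesis its statement omits; handling genuinely arbitrary words would require solving exactly the closure problem you name, and $\varphi=\tau_0=\tau_{\mathrm{last}}$ does not solve it.
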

\begin{proof}
Define $f_i:=\sigma g_i\sigma g_{i-1}\cdots \sigma g_1$ for $i=1,\dots,m$. After applying Lemma~\ref{lem:no bp 1} we may assume that none of the transformations $f_i^{-1}$ has base-points infinitely near to $[1:0:0]$.
We define 
\begin{align*}&N:=\max\{N(f_i,l)\mid i=1,\dots,m,\quad l\subset\PP^2\ \text{line}\}\\
& k:=\max\{i\mid N(f_i,l)=N\},\\
&\# :=\text{number of lines $l\subset\PP^2$ with $N(f_k,l)=N$}
\end{align*}
In other words, $N$ is the maximal discrepancy, $k$ is the maximal number of all $i$ such that $f_i$ has $N$ as a discrepancy and $\#\geq1$ is the number of lines contracted by $f_k$ with discrepancy $N$.
We do induction over the lexicographically ordered triple $(N,k,\#)$.

If $N=0$ there are no infinitely near base-points and we are done. Suppose that $N\geq1$.
We denote by $l_k\subset\PP^2$ a line such that $N=N(f_k,l_k)$ and define $p_k:=f_k(l_k)\in\PP^2$. 
Let $s\in\PP^2$ be a general point. 
Then there exists a linear de Jonqui\`eres map $h\in\PGL_3(\kk)\cap \JJ$ such that $\sigma h$ has $[1:0:0], p_k$ and  $s$ as base-points. 
By definition of $k$ and $N$ we have $N>N(f_{k+1},l_k)$. Since $f_{k+1}=\sigma g_{k+1}f_k$, Lemma~\ref{lem:discrepancy} implies that $p_k$ is also a base-point of $\sigma g_{k+1}$. It follows from Lemma~\ref{lem:composition} that $\deg(\sigma g_{k+1}(\sigma h)^{-1})=2$. Since $s$ is a general point, we find $a_1,a_2\in\PGL_3(\kk)\cap\JJ$ such that $\sigma g_{k+1}(\sigma h)^{-1}=a_2\sigma a_1$. 
Note that 
%$\deg(\sigma h\sigma g_k\cdots\sigma g_1)=\deg(\sigma g_{k-1}\cdots\sigma g_1)$ by Lemma~\ref{lem:composition} and 
$N(\sigma h\sigma g_k\cdots\sigma g_1,l_k)=N-1$ by Lemma~\ref{lem:discrepancy}. 

If $N(\sigma g_{k-1}\cdots\sigma g_1,l_k)<N$, we  find, by similar arguments as above, $b_1,b_2\in\PGL_3(\kk)\cap\JJ$ such that $\sigma h\sigma g_k=b_2\sigma b_1$ 
\[\xymatrix{\ar@{-->}[r]^{\sigma g_1}&\ar@{-->}[r]^{\sigma g_2}&\cdots&\cdots 
\ar@{-->}[r]^{\sigma g_{k-1}}&\ar@{-->}[dr]_{b_2\sigma b_1} \ar@{-->}[r]^{\sigma g_k}&\ar@{-->}[d]^{\sigma h}\ar@{-->}[r]^{\sigma g_{k+1}}  & \ar@{-->}[r]^{\sigma g_{k+2}}&\cdots \\
&&&&&\ar@{-->}[ur]_{a_2\sigma a_1}&}\]
The new sequence $g_m\sigma g_{m-1}\cdots \sigma g_{k+2}a_2\sigma (a_1b_2)\sigma (b_1g_{k-1})\sigma g_{k-2}\cdots\sigma g_1$ has maximal discrepancy at most $N$. If it is $N$, that means that there is either another line $l\subset\PP^2$ such that $N(\sigma g_k\cdots\sigma g_1,l)=N$, in which case $\#$ has decreased. Or that $N(\sigma g_k\cdots\sigma g_1,l)<N$ for all lines $l$ and $N(\sigma g_i\cdots\sigma g_1,l_i)=N$ for some $i<k$ and some line $l_i$. In any case, the triple $(N,k,\#)$ decreases. Note as well that 
\[\deg(\sigma h\sigma g_k\cdots\sigma g_1)=\deg(\sigma g_k\cdots \sigma g_1).\]

If $N(\sigma g_{k-1}\cdots\sigma g_1,l_k)=N$, 

 there are two options by Lemma~\ref{lem:discrepancy}:
\begin{itemize}
\item[(a)] The point $p_k$ is a base-point of $(\sigma g_k)^{-1}$ and, if $\eta\colon S\rightarrow \PP^2$ is the blow-up of the base-points of $(\sigma g_k)^{-1}$, then $(\eta^{-1} f_k)^{-1}(l_k)$ is the intersection point of an exceptional divisor and the strict transform of a line contracted by $(\sigma g_k)^{-1}$. 
\item[(b)] The point $p_k$ is not a base-point of $(\sigma g_k)^{-1}$ and is not on its contracted lines.
\end{itemize}

In case (a) we can proceed analogously as in the case above, where $N(\sigma g_{k-1}\cdots\sigma g_1,l_k)<N$.
So assume now that we are in case (b). The image $p_k'$ of $p_k$ by $(\sigma g_k)^{-1}$ is a proper point of $\PP^2$. 
The point $s\in\PP^2$ is general, hence also its image $s'$ by $(\sigma g_k)^{-1}$ is a general point of $\PP^2$. In particular, there exists $h'\in\PGL_3(\kk)\cap \JJ$ such that $\sigma h'$ has base-points $[1:0:0],p_k'$ and $s'$. 
The map $\sigma h\sigma g_k(\sigma h')^{-1}$ is of degree $2$, and since the points $s$ and $s'$ are in general position, we find $c_1,c_2\in\PGL_3(\kk)\cap\JJ$ such that $\sigma h\sigma g_k(\sigma h')^{-1}=c_2\sigma c_1$. 
Moreover, we have $N(\sigma h'\sigma g_{k-1}\cdots \sigma g_1,l_k)=N-1$. Also note that 
\[\deg(\sigma h'\sigma g_{k-1}\cdots\sigma g_1)=\deg(\sigma g_{k-1}\cdots\sigma g_1).\]
We proceed like this until we find an index $i<k$ such that $N(\sigma g_i\cdots\sigma g_1,l_k)<N$. Such an index has to exist because $\sigma g_m\cdots\sigma g_1=\id$.
There, the preimage of $p_k$ is a base-point of $(\sigma g_i\cdots\sigma g_1)^{-1}$, and we repeat the construction of the former case. It yields a new sequence with maximal discrepancy at most $N$. 
If it is equal to $N$ then either at the index $k$ and another line or at an index smaller than $k$. In other words, the triple $(N,k,\#)$ has decreased. 

The commutative diagram below illustrates the procedure if $N(\sigma g_{k-2}\cdots\sigma g_1,l_k)<N$.
\[\xymatrix{\ar@{-->}[r]^{\sigma g_1}&\ar@{-->}[r]^{\sigma g_2}&\cdots&\cdots 
\ar@{-->}[r]^{\sigma g_{k-1}} \ar@{-->}[dr]_{b_2\sigma b_1}&\ar@{-->}[d]^{\sigma h'}\ar@{-->}[r]^{\sigma g_k}&\ar@{-->}[d]^{\sigma h}\ar@{-->}[r]^{\sigma g_{k+1}}  & \ar@{-->}[r]^{\sigma g_{k+2}}&\cdots \\
&&&&\ar@{-->}[r]_{c_2\sigma c_1}&\ar@{-->}[ur]_{a_2\sigma a_1}&}\]

It remains to remark that all the squares and triangles are generated by relations (\ref{rel1})--(\ref{rel5}) by Lemma~\ref{lem:deg2} and Lemma~\ref{lem:square}. 
\end{proof}

%%%%%%%%%%%%%%%%%%%%%%%%%%%%%%%%%%%
\section{Proof of Theorem~\ref{thm:relations}}

\begin{prop}\label{prop:relations J}
Let $g_1,\dots,g_m\in\mathrm{PGL}_3(\kk)\cap\JJ$ and suppose that $\Id=g_m\sigma g_{m-1}\sigma\cdots\sigma g_1$ as maps. Then this expression is generated by relations $(\ref{rel1})$--$(\ref{rel5})$.\par
\end{prop}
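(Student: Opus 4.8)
### Proof proposal for Proposition~\ref{prop:relations J}

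\textbf{Overall strategy.} The plan is to reduce the arbitrary relation $\Id=g_m\sigma g_{m-1}\sigma\cdots\sigma g_1$ to the special form handled by Theorem~\ref{blancisk} (Blanc's presentation), and then to show that the defining relation of that presentation — the commutation $\sigma\tau=\tau\sigma$ with $\tau=[y:x:z]$ — is itself a consequence of relations $(\ref{rel1})$--$(\ref{rel5})$. The two preparatory lemmas of the previous section are exactly the tools for the reduction: first I would invoke Lemma~\ref{lem:no bp 1} to conjugate the word (by some $\varphi\in\JJ$) into a word $h_m\sigma h_{m-1}\sigma\cdots h_1\sigma$ whose partial composites have no infinitely near base-points above $[1:0:0]$, the conjugation itself being generated by $(\ref{rel1})$--$(\ref{rel5})$ and preserving the partial degrees. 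Then Lemma~\ref{lem:no bp 2} removes \emph{all} remaining infinitely near base-points, again at the cost only of relations $(\ref{rel1})$--$(\ref{rel5})$ and again preserving partial degrees.

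\textbf{The core reduction.} Once every partial composite $\sigma h_i\sigma\cdots\sigma h_1$ and its inverse has only proper base-points in $\PP^2$, each elementary quadratic step $\sigma h_i$ is a genuine de Jonqui\`eres quadratic map with three proper base-points, so the whole word lives in the amalgam $\PGL_3(\kk)\ast_{\PGL_3\cap\JJ}\JJ$ of Theorem~\ref{blancisk} in a controlled way. Because $\Id$ holds as a map, the word represents the trivial element of $\Bir(\PP^2)$; by Blanc's theorem it is therefore a product of conjugates of relations \emph{internal} to $\PGL_3(\kk)$ and to $\JJ$, together with conjugates of the single extra relation $\sigma\tau\sigma\tau=\Id$. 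The relations internal to $\PGL_3(\kk)$ are covered by $(\ref{rel1})$, and I would argue that the relations internal to $\JJ$ that can actually appear in our reduced word are precisely those of the shape treated by Lemma~\ref{lem:deg1} (giving a linear $\sigma g\sigma$), Lemma~\ref{lem:deg2} (giving a factorisation $g'\sigma g''$), and Lemma~\ref{lem:square} (the square relation among two consecutive quadratic steps) — each of which we have already shown is generated by $(\ref{rel1})$--$(\ref{rel5})$.

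\textbf{Dispatching the Blanc relation.} The remaining and most delicate point is the extra amalgam relation $\sigma\tau=\tau\sigma$ with $\tau=[y:x:z]\in S_3$. But $\tau$ is a transposition, so this is exactly an instance of relation $(\ref{rel3})$ (with the caveat that one must check $\tau\in S_3\cap\JJ$, i.e. that $\tau$ fixes $[1:0:0]$, which it does): relation $(\ref{rel3})$ reads $\sigma\tau(\tau\sigma)^{-1}=\Id$, that is $\sigma\tau=\sigma\tau$\dots more precisely $\sigma\tau=\tau\sigma$ after rearranging, so the Blanc relation is literally $(\ref{rel3})$. Thus every generator of the kernel of the presentation map is accounted for, and the original expression is generated by $(\ref{rel1})$--$(\ref{rel5})$.

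\textbf{Where the difficulty lies.} The main obstacle is \emph{bookkeeping}, not any single hard computation: I must make sure that the conjugation $\varphi(\cdot)\varphi^{-1}$ supplied by the two lemmas does not secretly require relations outside $(\ref{rel1})$--$(\ref{rel5})$ (this is guaranteed by clause~(1) of each lemma), and that after reduction every relation forced by Blanc's theorem really does fall into one of the already-treated shapes. Concretely, the subtle step is translating Blanc's ``amalgam + one relation'' statement into a statement about which \emph{words} in $\sigma$ and $\PGL_3(\kk)$ are trivial, and verifying that breaking such a word into consecutive quadratic steps $\sigma h_i$ only ever produces triangles and squares of the types in Lemmas~\ref{lem:deg1}, \ref{lem:deg2}, \ref{lem:square}. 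The non-collinearity hypotheses needed by those lemmas must be arranged; this is where the generality of the auxiliary points chosen in Lemmas~\ref{lem:no bp 1}--\ref{lem:no bp 2} pays off, since it puts all relevant base-points in general (hence non-collinear) position.
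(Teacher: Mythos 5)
There is a genuine gap, and it sits exactly where you locate ``the bookkeeping'': your appeal to Theorem~\ref{blancisk} is circular for this proposition. The word $g_m\sigma g_{m-1}\cdots\sigma g_1$ lies entirely inside $\JJ$ (each $g_i\in\PGL_3(\kk)\cap\JJ$ and $\sigma\in\JJ$), so in the amalgamated product its triviality is witnessed purely by relations \emph{internal to $\JJ$}; Blanc's theorem treats $\JJ$ as a black box and gives no decomposition of a $\JJ$-internal relation into words in $\sigma$ and $\PGL_3(\kk)\cap\JJ$. Invoking it therefore buys nothing here, and your key step --- ``the relations internal to $\JJ$ that can actually appear in our reduced word are precisely those of the shape treated by Lemmas~\ref{lem:deg1}, \ref{lem:deg2}, \ref{lem:square}'' --- is asserted without any mechanism; it is not a consequence of anything you have set up, but rather the entire content of Proposition~\ref{prop:relations J}. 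In the paper's architecture the implication runs the other way: this proposition is what makes the homomorphism $w\colon\JJ\to G$ well defined, and only \emph{then} is Theorem~\ref{blancisk} invoked, in the proof of the Main Theorem. Your paragraph dispatching the relation $\sigma\tau=\tau\sigma$ via relation~(\ref{rel3}) is correct but belongs to that later proof, not to this one.

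What is missing is the actual reduction. After Lemma~\ref{lem:no bp 2} removes infinitely near base-points (at no cost in relations and without changing partial degrees --- this part of your plan matches the paper), the paper runs an induction on the lexicographically ordered pair $(D,n)$, where $D$ is the maximal degree of the linear systems $\Lambda_i=\sigma g_{i-1}\cdots\sigma g_1(\Lambda_0)$ and $n$ is the largest index attaining it, with a case analysis on $\deg(\sigma g_n\sigma)\in\{1,2,3\}$. The cases of degree $1$ and $2$ are handled by Lemmas~\ref{lem:deg1} and~\ref{lem:deg2}, together with a multiplicity computation showing the collinear subcase $a_2b_2=0$ is impossible once the $\Lambda_i$ have no infinitely near base-points. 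The degree-$3$ case is genuinely new: one must insert an auxiliary quadratic map $\sigma g'$ with base-points $[1:0:0],p_i,q_1$ chosen according to the multiplicities of $\Lambda_n$, splitting $\sigma g_n\sigma$ into two degree-$2$ compositions to which the previous case applies, and the required non-collinearity conditions are verified from the absence of infinitely near base-points of $\Lambda_{n-1}$ and $\Lambda_{n+1}$ --- not from genericity, since these points are base-points of the given maps and cannot be moved into general position. None of this can be bypassed by quoting the amalgam presentation; without this inductive mechanism your proposal establishes nothing beyond the special-shape lemmas you already cite.
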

\begin{proof}
Let $\Lambda_0$ be the complete linear system of lines in $\PP^2$ and define 
\[\Lambda_i:=\sigma g_{i-1}\sigma\cdots\sigma g_1(\Lambda_0),\quad i=1,\dots,m.\]
Let 
\[\delta_i:=\deg(\Lambda_i),\quad D:=\max\{\delta_i\mid i=1,\dots, m\}\ \text{and}\ n:=\max\{i\mid \delta_i=D\}.\]
We do induction on the lexicographically ordered set of pairs of positive integers $(D,n)$.\par
If $D=1$, then $m=1$ and there is nothing to prove.\par
Let $D>1$. By Lemma~\ref{lem:no bp 2}, we can suppose that for each $i=1,\dots,m$ the transformation $(g_i\sigma g_{i-1}\sigma\cdots \sigma g_1)^{-1}$ does not have any infinitely near base-points, and we can do this without increasing the pair $(D,n)$. Equivalently, each $\Lambda_i$ does not have any infinitely near base-points.

All the $g_i$ are de Jonqui\`eres maps and therefore fix $[1:0:0]$, the maps $\sigma g_i$ and $\sigma $ always have $[1:0:0]$ as common base-point. 
In particular, Lemma~\ref{lem:composition} yields $\deg(\sigma g_i\sigma)\leq3$ for all $i=1,\dots,m$. 
We will look at the three distinct cases $\deg(\sigma g_n\sigma)=1,2$
 and $3$.\par 
 %%%%
{\bf(a)} If $\deg(\sigma g_n\sigma)=1$, Lemma~\ref{lem:deg1} implies that we can replace the word $\sigma g_n\sigma$ by the linear map $g_n'=\sigma g_n\sigma$ using only relations  (\ref{rel1})--(\ref{rel5}). We obtain a new pair $(D',n')$ where $D'\leq D$ and if $D=D'$ then $n<n'$. \par
%%%%
{\bf(b)} If $\deg(\sigma g_n\sigma)=2$, then $\sigma$ and $\sigma g_n$ have exactly two common base-points, among them $[1:0:0]$. Up to coordinate permutations, these two points are $[1:0:0]$ and $[0:1:0]$. More precisely, there exist two permutations of coordinates $\tau_1, \tau_2\in S_3\cap\JJ$ such that $\tau_1g_n\tau_2$ fixes the points $[1:0:0]$ and $[0:1:0]$. Hence there are $a_1, a_2, b_1, b_2, c\in\kk$ such that
\[\tau_1g_n\tau_2=[a_1x+a_2z:b_1y+b_2z:cz].\]
Using relations (\ref{rel1}) and (\ref{rel3}) we get
\[g_m\sigma\cdots \sigma g_{n+1}\sigma \tau_1^{-1}\tau_1g_n\tau_2\tau_2^{-1}\sigma\cdots g_1=g_m\sigma\cdots\sigma (g_{n+1}\tau_1^{-1})\sigma(\tau_1g_n\tau_2)\sigma(\tau_2^{-1}g_{n-1})\sigma\cdots g_1.\]
This replacement does not change the pair $(D,n)$. So, we may assume that 
\[\tau_1=\tau_2=\mathrm{id}\quad\text{and}\quad g_n=[a_1x+a_2z:b_1y+b_2z:cz].\]
By assumption, for $i=1,\dots,n$, the maps $g_i\sigma g_{i-1}\sigma\cdots \sigma g_1$ have no infinitely near base-points. It follows that $\Lambda_n$ has no infinitely near base-points. 

%%%
\indent {\bf(b1)} If $a_2b_2\neq0$, then no three of the base-points of $\sigma$ and $\sigma g_n$ are collinear. By Lemma~\ref{lem:deg2} there exist $g',g''\in\mathrm{PGL}_3(\kk)$ such that we can replace the word $\sigma g_n\sigma$ with the word $g'\sigma g''$ using relations (\ref{rel1})--(\ref{rel5}). This yields a new pair $(D',n')$ where $D'\leq D$ and if $D=D'$ then $n'<n$. 
%%%%%

{\bf(b2)} We want to see that $a_2b_2=0$ is impossible. Suppose that $a_2b_2=0$. Then $q:=g_{n}^{-1}([0:0:1])$ is a base-point of $\sigma g_{n}$ on a line that is contracted by $(\sigma g_{n-1})^{-1}$ (see Remark~\ref{rmk:the linear map}).
Remark~\ref{rmk:composition} implies that 
\[D-1=\delta_{n+1}=D+1-m_{[0:1:0]}(\Lambda_n)-m_q(\Lambda_n)\]
In particular, we have 
\[m_q(\Lambda_n)=1\]
Since it is not a base-point of $\sigma$, it is not a base-point of $(\sigma g_{n-1})^{-1}$. Hence its proper image by $(\sigma g_{n-1})^{-1}$ is a base-point of $\Lambda_{n-1}$. Because of $a_2b_2=0$, this point is an infinitely near point, a contradiction to our assumption that the $\Lambda_i$ do not have any infinitely near base-points. 

%%%%%%%%%%%%
\indent{\bf(c)} Assume that $\deg(\sigma g_n\sigma)=3$, so $\sigma$ and $\sigma g_n$ have one common base point, which is $[1:0:0]$. Denote by $p_0=[1:0:0]$, $p_1=[0:1:0]$, $p_2=[0:0:1]$ the base-points of $\sigma$ and by $p_0,q_1,q_2$ the base-points of $\sigma g_n$. Remark~\ref{rmk:composition} implies
\begin{align*}
&\delta_n\geq \deg(\Lambda_{n-1})=\deg((\sigma g_{n-1})^{-1}(\Lambda_n))=\delta_n+1-m_{p_1}(\Lambda_n)-m_{p_2}(\Lambda_n)\\
&\delta_n>\deg(\Lambda_{n+1})=\deg(\sigma g_n(\Lambda_n))=\delta_n+1-m_{q_1}(\Lambda_n)-m_{q_2}(\Lambda_n).
\end{align*}
Therefore, we obtain
\begin{equation*}1=m_{q_1}(\Lambda_n)=m_{q_2}(\Lambda_n),\quad 1\leq m_{p_1}(\Lambda_n)+m_{p_2}(\Lambda_n)\end{equation*}
Choose $i\in\{1,2\}$ such that $m_{p_i}(\Lambda_n)=1$. Then the points $[1:0:0],p_i$ and $q_1$ are not collinear (because $m_{p_0}(\Lambda_n)+m_{p_1}(\Lambda_n)+m_{q_1}(\Lambda_n)>\delta_n$) and there exists a $g'\in\PGL_3(\kk)\cap\mathcal{J}_*$ such that $\sigma g'$ has base-points $[1:0:0],p_i$ and $q_1$. Consider Figure~\ref{fig:d6}; we claim that the left pair and the right pair of maps satisfy the assumptions of case (b1), which implies that $\sigma g(\sigma g_{n-1}')^{-1}$ and $\sigma g_n\sigma g'$ can be replaced by maps of the form $h\sigma h'$ and $h''\sigma h'''$ respectively for some linear maps $h,h',h'', h'''$ and the pair $(D,n)$ decreases. \par
\begin{minipage}[h]{\textwidth}
\begin{center}
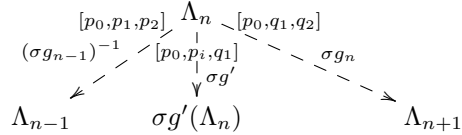

$\xymatrix{&\Lambda_n\ar@{-->}[ld]_(.25){[p_0,p_1,p_2]}_(.55){(\sigma g_{n-1})^{-1}}\ar@{-->}[d]|(.37){[p_0,p_i,q_1]}^(.6){\sigma g'}\ar@{-->}[rrd]^(.25){[p_0,q_1,q_2]}^(.55){\sigma g_n}&&\\ \Lambda_{n-1}&\sigma g'(\Lambda_n)&&\Lambda_{n+1}& }$
\captionof{figure}{The two maps on the left and the two maps on the right satisfy the assumptions of case (b1).}\label{fig:d6}
\end{center}
\vspace{.2em}
\end{minipage}
The maps $\sigma$ and $\sigma g'$ have two common base-points, namely $p_0$ and $p_i$. The maps $\sigma g'$ and $\sigma g_n$ have the base-points $p_0$ and $q_1$ in common. Remark~\ref{rmk:composition} tells us that
\[\deg(\sigma g'(\Lambda_n))=\delta_n+1-m_{p_i}(\Lambda_n)-m_{q_1}(\Lambda_n)<\delta_n=D.\]
It remains to check that no three of the four points $p_0,p_1,p_2,q_1$ are collinear and that no three of the four points $p_0,p_i,q_1,q_2$ are collinear. Indeed, in the latter case all four points are base-points of $\Lambda_n$ and the image of $p_i$ by $\sigma g_n$ is a base-point of $\Lambda_{n+1}$, which does not have any infinitely near base-points. Therefore, no three of $p_0,p_i,q_1,q_2$ are collinear. In the first case, at least $p_0,p_i,q_1$ are base-points of $\Lambda_n$ and therefore not collinear by Remark~\ref{rmk:composition}. The points $p_1,p_2,q_1$ and the points $p_0,p_2,q_1$ are not collinear because $\Lambda_{n-1}$ has no infinitely near base-points. 
Thus, we can apply case (b1) to the maps $\sigma$ and $\sigma g'$ and to the maps $\sigma$ and $\sigma g_ng'^{-1}$. The pair $(D,n)$ decreases. 
\end{proof}

So far we have been working with de Jonqui\`eres maps only. Now we will use the structure of the plane Cremona group as an almost amalgamated product from Theorem \ref{blancisk} to prove the Main Theorem:

\begin{proof}[Proof of Main Theorem]
Let $G=\langle \sigma,\PGL_3(\kk)\mid (\ref{rel1})-(\ref{rel5})\rangle$ be the group generated by $\sigma$ and $\PGL_3(\kk)$ divided by the relations (\ref{rel1})--(\ref{rel5}), and let $\pi\colon G\rightarrow\Bir(\PP^2)$ be the canonical homomorphism that sends generators onto generators. It follows from Proposition~\ref{prop:relations J} that sending an element of $\JJ$ onto its corresponding word in $G$ is well defined. This yields a homomorphism $w\colon\JJ\longrightarrow G$ that satisfies $\pi\circ w=\mathrm{Id}_{\JJ}$ and is therefore injective. Consider the commutative diagram
\[\xymatrix{G&\JJ\ar@{_{(}->}[l]_{w}\\ \PGL_3(\kk)\ar@{^{(}->}[u]&\PGL_3(\kk)\cap\JJ\ar@{_{(}->}[l]\ar@{_{(}->}[u]}\]
where all unnamed homomorphisms are the canonical inclusions. The universal property of the amalgamated product implies that there exists a unique homomorphism 
\[\varphi\colon\PGL_3(\kk)\ast_{\PGL_3(\kk)\cap\JJ}\JJ\rightarrow G\] 
such that the following diagram commutes:
\[\xymatrix{G&&\\
&\PGL_3(\kk)\ast_{\PGL_3(\kk)\cap\JJ}\JJ \ar[lu]_{\exists\,!\ \varphi}&\JJ\ar[l]_(.3){w}\ar@/_1pc/[ull]\\ 
&\PGL_3(\kk)\ar@{^{(}->}[u] \ar@/^1.5pc/[ulu]&\PGL_3(\kk)\cap\JJ\ar@{_{(}->}[l]\ar@{_{(}->}[u]}\]
By Theorem~\ref{blancisk}, the group $\Bir(\PP^2)$ is isomorphic to $\PGL_3(\kk)\ast_{\PGL_3(\kk)\cap\JJ}\JJ$ divided by the relation $\tau\sigma\tau\sigma$, where $\tau=[y:x:z]$, which is a relation that holds as well in $G$. So $\varphi$ factors through the quotient $\PGL_3(\kk)\ast_{\PGL_3(\kk)\cap\JJ}\JJ/\langle\tau\sigma\tau\sigma\rangle$ and we thus obtain a homomorphism $\bar{\varphi}\colon \Bir(\PP^2)\to G$. In fact, the homomorphisms $\pi$ and $\bar{\varphi}$ both send generators to generators:
\begin{align*}G\stackrel{\pi}\longrightarrow\Bir(\PP^2),\quad &\sigma\longmapsto\sigma,\quad\alpha\longmapsto\alpha\quad\forall\ \alpha\in\mathrm{PGL}_3\\\
\Bir(\PP^2)\stackrel{\bar{\varphi}}\longrightarrow G,\quad&\sigma\mapsfrom\sigma,\quad\alpha\longmapsto\alpha\quad\forall\ \alpha\in\mathrm{PGL}_3.
\end{align*}
%\xymatrix{ G\ar@/^.5pc/[rr]^{\pi} && \Bir_\kk(\PP^2) \ar@/^.5pc/[ll]^{\varphi}},
Hence $\bar{\varphi}$ and $\pi$ are isomorphisms that are inverse to each other.
\end{proof}

\vfill
\end{document}